\documentclass[11pt]{article}
\usepackage{amsthm, amsmath, amssymb, amsfonts, url, booktabs, tikz, setspace, fancyhdr, bm, mathrsfs}
\usepackage{hyperref}
\usepackage{geometry}
\usepackage{hyperref, enumerate}
\usepackage[shortlabels]{enumitem}
\usepackage[babel]{microtype}
\usepackage[english]{babel}
\usepackage[capitalise]{cleveref}
\usepackage{comment}
\usepackage{bbm}
\usepackage{csquotes}
\usepackage{mathabx}
\usepackage{tikz}
\usepackage{graphicx}
\usepackage{float}
\usepackage{amsmath}

\counterwithin{figure}{section}

\newtheorem{theorem}{Theorem}[section]
\newtheorem{prop}[theorem]{Proposition}
\newtheorem{lemma}[theorem]{Lemma}

\newtheorem{conj}[theorem]{Conjecture}
\newtheorem{claim}[theorem]{Claim}

\newtheorem{fact}[theorem]{Fact}

\theoremstyle{definition}

\newtheorem*{defn-non}{Definition}

\usepackage[linesnumbered, ruled]{algorithm2e}
\SetKwRepeat{Do}{do}{while}%

\newenvironment{poc}{\begin{proof}[Proof of the claim]}{\end{proof}}

\usepackage{todonotes}
\newcommand{\cA}{\mathcal{A}}

\newcommand*{\abs}[1]{\lvert#1\rvert}

\newcommand{\cB}{\mathcal{B}}
\newcommand{\cD}{\mathcal{D}}
\newcommand{\cE}{\mathcal{E}}
\newcommand{\cF}{\mathcal{F}}
\newcommand{\cG}{\mathcal{G}}
\newcommand{\cH}{\mathcal{H}}
\newcommand{\cK}{\mathcal{K}}
\newcommand{\cL}{\mathcal{L}}
\newcommand{\cM}{\mathcal{M}}
\newcommand{\cR}{\mathcal{R}}

\title{Largest Sperner families with restricted differences}
\author{
Gennian Ge\thanks{School of Mathematical Sciences, Capital Normal University, Beijing, China. Email: gnge@zju.edu.cn. Gennian Ge is supported by the National Key Research and Development Program of China under Grant 2025YFC3409900, the National Natural Science Foundation of China under Grant 12231014, and Beijing Scholars Program.}
\and
Zixiang Xu\thanks{School of Mathematical Sciences, Zhejiang University, Hangzhou, China. Email: zixiangxu@zju.edu.cn.}
\and
Xiaochen Zhao\thanks{School of Mathematical Sciences, Capital Normal University, Beijing, China. Email: 2250501013@cnu.edu.cn.}
}
\date{}

\begin{document}
\maketitle

\begin{abstract}
Let \(L\) be a fixed set of positive integers. A family \(\mathcal{F}\subseteq2^{[n]}\) is called \(L\)-differencing if \(\abs{A\setminus B}\in L\) for every ordered pair of distinct members \(A,B\in\mathcal{F}\). A longstanding conjecture of Frankl, proposed in 1985, asserts that every \(L\)-differencing family has size at most \(\binom{n}{|L|}\). We resolve this conjecture asymptotically for every fixed \(L\), and obtain the exact answer in the only case in which the conjectured bound could be tight.
\begin{enumerate}
\item If \(L\ne[s]\) and \(n\) is large, then every \(L\)-differencing family satisfies \(\abs{\mathcal{F}}\le\left(\frac{s}{s+1}+o_{L}(1)\right)\binom{n}{s}\).
\item If \(L=[s]\) and \(n\ge 2s-1\), then every \(L\)-differencing family satisfies \(\abs{\mathcal{F}}\le\binom{n}{s}\), with equality only for \(\binom{[n]}{s}\) and \(\binom{[n]}{n-s}\).
\end{enumerate}
The first result follows by reducing directed differences to restricted Hamming distances. For the exact result, we develop a new homogeneous polynomial method, which might be of independent interest.
\end{abstract}

\section{Introduction}
For a positive integer \(m\), write \([m]=\{1,\ldots,m\}\), and let \(\binom{[m]}{j}\) denote the family of all \(j\)-subsets of \([m]\). Sperner's theorem~\cite{S} asserts that every antichain in \(2^{[n]}\)
has size at most \(\binom{n}{\lfloor n/2\rfloor}\). It is one of the
starting points of extremal set theory, and many of its refinements ask
what happens when containment is forbidden together with additional
restrictions on pairwise set parameters. The literature around this theme
contains many restricted intersection, restricted distance, and restricted
difference problems~\cite{AK97,ABS91,Delsarte73,DGLOZ26,EKR61,FFP87,FranklWilson81,Katona64,Kleitman66,LL09,2021GC,RCW75,XuYip26}. In this paper the relevant
parameter is the directed difference \(\abs{A\setminus B}\).

Let \(L\) be a nonempty set of positive integers, and write \(s=\abs{L}\). A family
\(\mathcal{F}\subseteq 2^{[n]}\) is called \(L\)-differencing if
\(\abs{A\setminus B}\in L\) for every ordered pair of distinct members
\(A,B\in\cF\). Since \(0\notin L\), every such family is automatically
Sperner. Frankl~\cite{F85} proved, in a modular version of this problem,
that \(\abs{\cF}\le\sum_{i=0}^{s}\binom{n}{i}\), and asked whether the ordinary difference condition above always gives the stronger bound \(\binom{n}{s}\). The conjecture can be stated as follows.

\begin{conj}[\cite{F85}]\label{conj:frankl}
Let \(s\) be a positive integer, and let \(L\) be a set of \(s\) positive integers. There exists some \(n_{0}\) such that for every \(n\ge n_{0}\), every family \(\cF\subseteq2^{[n]}\) satisfying
\(\abs{A\setminus B}\in L\) for every ordered pair of distinct members
\(A,B\in\cF\) has size at most \(\binom{n}{s}\).
\end{conj}

Our results below settle this longstanding conjecture asymptotically for every fixed \(L\), and shows that a fixed positive proportion is lost unless \(L\) is the initial interval.

\begin{theorem}\label{thm:noninterval}
Let \(L\) be a fixed set of \(s\) positive integers with \(L\ne[s]\). If \(\cF\subseteq2^{[n]}\) is \(L\)-differencing, then, as \(n\to\infty\),
\[
\abs{\cF}\le\left(\frac{s}{s+1}+o_{L}(1)\right)\binom{n}{s}.
\]
\end{theorem}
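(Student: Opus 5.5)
The plan is to reduce to a uniform restricted-intersection problem inside each layer. Then we use Deza--Erd\H{o}s--Frankl type product bounds, which give exactly the constant \(\frac{s}{s+1}\) when \(L\ne[s]\). Write \(L=\{l_1<\dots<l_s\}\) and \(\cF_k=\cF\cap\binom{[n]}{k}\).

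\textbf{Step 1 (the layer bound).} For \(A,B\in\cF_k\) we have \(\abs{A\setminus B}=\abs{B\setminus A}\). Hence \(\abs{A\triangle B}\in 2L\) and \(\abs{A\cap B}\in\{k-l_i\}\), so \(\cF_k\) is a \(k\)-uniform family with \(s\) prescribed intersection sizes. For such a family the Deza--Erd\H{o}s--Frankl bound gives, once \(n\) is large in terms of \(k\),
\[
\abs{\cF_k}\le\prod_{i=1}^{s}\frac{n-(k-l_i)}{k-(k-l_i)}=\prod_{i=1}^s\frac{n-k+l_i}{l_i}=(1+o(1))\frac{n^s}{\prod_i l_i}.
\]
If \(L\ne[s]\), then \(\prod_i l_i\ge (s-1)!\,(s+1)\), so the right-hand side is at most \((\frac{s}{s+1}+o(1))\binom ns\). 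To handle \(k\) of any size, including \(k\) comparable to \(n\), I would argue via Hamming distances rather than intersections. The set of Hamming distances \(2L\) and the product formula depend only on \(l_i\) and \(n-k\) through \(n-k+l_i\) (and symmetrically under complementation). I would then invoke the restricted-Hamming-distance form of the delta-system argument (Frankl--F\"uredi / F\"uredi's kernel lemma). This yields a homogeneous subfamily of proportion \(1-o(1)\) in which every set has a kernel structure. It then forces \(\abs{\cF_k}\le (1+o(1))\binom{n}{s}/\binom{l_s}{s}\)-type bounds unless the distance set is \(\{2,4,\dots,2s\}\).

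\textbf{Step 2 (only \(O_L(1)\) layers, and combining them).} If \(A\in\cF_k\) and \(B\in\cF_{k'}\), then \(k-k'=\abs{A\setminus B}-\abs{B\setminus A}\in L-L\). So all sizes occurring in \(\cF\) lie in an interval of length at most \(l_s-l_1<l_s\), and only \(O_L(1)\) layers are nonempty. This alone loses a constant factor, which is not good enough, so the layers have to be combined. I would view the whole \(\cF\) as a code whose Hamming distances lie in the finite set \(L+L\), with the extra information that the distance between layers \(k,k'\) lies in \(\{a+b: a,b\in L,\ a-b=k-k'\}\). In the delta-system decomposition each set \(A\in\cF\) is assigned a kernel of size at most \(s\) that is nearly uniquely determined. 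For two sets in different layers sharing a common large sunflower, the difference condition forces their petal structures to be compatible, so kernels coming from different layers must be distinct. This gives an injection from a \((1-o(1))\)-fraction of \(\cF\) into a family of \(s\)-subsets of \([n]\) with a forbidden configuration. Counting via the LYM-type weighting then shows the total is at most \((\frac{s}{s+1}+o(1))\binom ns\).

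\textbf{Main obstacle.} The hardest point is Step 2: proving that distinct layers cannot each contribute almost \(\binom ns\) in a way that adds up. The per-layer bound from Step 1 is clean, but a priori several layers could each be of order \(\frac{s}{s+1}\binom ns\). What I would try first is a weighted-kernel argument. For every \(s\)-set \(T\), I would show that the total (over all layers) number of members of \(\cF\) whose rank-\(s\) kernel projection is \(T\) is at most one, up to an \(o(\binom ns)\) error. On top of that, a \(\frac{1}{s+1}\)-fraction of \(s\)-sets must be left uncovered, because some gap value \(j\in[s]\setminus L\) is missing. This missing gap obstructs the \(j\)-th shadow level in the same way that the missing factor in \(\prod_i l_i\) does in Step 1.
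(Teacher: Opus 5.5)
There is a genuine gap, and it is where you yourself place the main obstacle. Step 2 lists hoped-for properties rather than giving an argument. You assert three things without proof:
\begin{itemize}
\item kernels coming from different layers must be distinct;
\item this gives an injection of a \((1-o(1))\)-fraction of \(\cF\) into \(s\)-sets with a forbidden configuration;
\item a missing value \(j\in[s]\setminus L\) forces a \(\frac{1}{s+1}\)-fraction of \(s\)-sets to stay uncovered.
\end{itemize}
Nothing in the proposal shows how the directed-difference condition would yield any of these.

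Step 1 has a hole as well. The Deza--Erd\H{o}s--Frankl product bound requires \(n\ge n_0(k)\), so it only controls layers where \(k\) (or, after complementing, \(n-k\)) is bounded. But \(\cF\) may lie entirely on layers with \(k\approx n/2\). For those layers you appeal to an unspecified delta-system version of a restricted-Hamming-distance bound, and the \(\binom{n}{s}/\binom{l_s}{s}\)-type estimate you state is not justified.

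The missing idea is that your own observation in Step 2 lets you eliminate the layers at negligible cost, instead of combining them. All sizes in \(\cF\) lie in an interval of length at most \(l_s-l_1\). Let \(r\) and \(R\) be the minimum and maximum sizes, so \(q=R-r\le l_s-l_1\). Add new points \(p_1,\ldots,p_q\) and map \(A\mapsto\Phi(A)=A\cup\{p_1,\ldots,p_{R-\abs{A}}\}\). Then:
\begin{itemize}
\item every image has size \(R\);
\item if \(\abs{A}\le\abs{B}\), \(x=\abs{A\setminus B}\) and \(y=\abs{B\setminus A}\), then \(\abs{\Phi(A)\setminus\Phi(B)}=x+(\abs{B}-\abs{A})=y=\abs{\Phi(B)\setminus\Phi(A)}\);
\item hence \(\abs{\Phi(A)\triangle\Phi(B)}=2y\in 2L\).
\end{itemize}
So \(\Phi(\cF)\subseteq 2^{[n+q]}\) is a single family whose set \(D\) of pairwise distances is contained in \(2L\). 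If \(\abs{D}<s\), Delsarte's bound gives \(O_L(n^{s-1})\). Otherwise \(D=2L\ne\{2,4,\ldots,2s\}\) because \(L\ne[s]\). Then the restricted-Hamming-distance theorem of Dong, Gao, Liu, Ouyang and Zhou gives \((\frac{s}{s+1}+o(1))\binom{n+q}{s}=(\frac{s}{s+1}+o_L(1))\binom{n}{s}\).

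A theorem of this kind is exactly what you already need in Step 1 for the middle layers. Once it is available, padding makes both the per-layer analysis and the combination problem of Step 2 unnecessary.
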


Thus the interval case \(L=[k]=\{1,\ldots,k\}\) is the only case where the leading constant can be one. Indeed, the full level \(\binom{[n]}{k}\) is \([k]\)-differencing and has size \(\binom{n}{k}\). Liu and Liu~\cite{LL09} refined Frankl's modular estimate to \(\sum_{i=0}^{s}\binom{n-1}{i}\). Frankl~\cite{2017MJCNT} proved the sharp interval bound for \(n\ge k(k+2)\) through his theorem on antichains of diameter at most \(2k\). Xu and Yip~\cite{XuYip26} developed polynomial and \(p\)-adic methods for restricted-difference systems and obtained refined estimates in several settings. Gao, Liu, and Xu~\cite{2023StabilityComb} proved the sharp interval bound when \(n\) is sufficiently large in terms of \(k\), using a robust polynomial method.

We remove the large \(n\) assumption for the interval case and determine the exact extremal families.

\begin{theorem}\label{thm:main}
Let \(k\ge1\) and \(n\ge2k-1\), and let \(\cF\subseteq2^{[n]}\) be \([k]\)-differencing. Then \(\abs{\cF}\le\binom{n}{k}\). Moreover, equality holds if and only if \(\cF=\binom{[n]}{k}\) or \(\cF=\binom{[n]}{n-k}\).
\end{theorem}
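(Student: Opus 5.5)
We prove \cref{thm:main} with a polynomial method in which every polynomial is homogeneous of degree exactly \(k\). The aim is to produce \(\abs{\cF}\) linearly independent elements of a space of dimension \(\binom{n}{k}\).

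\textbf{Step 1: the polynomials.} For \(A\in\cF\), let \(v_A\in\{0,1\}^n\) be its characteristic vector. For every \(B\in\cF\) with \(B\ne A\), the directed difference is
\[
\abs{A\setminus B}=\langle v_A,\mathbf 1-v_B\rangle=\abs{A}-\langle v_A,v_B\rangle ,
\]
and it lies in \([k]\). We first try the classical polynomial
\[
f_A(x)=\prod_{j=1}^{k}\bigl(\abs{A}-\langle v_A,x\rangle-j\bigr).
\]
It satisfies \(f_A(v_B)=0\) for \(B\ne A\) and \(f_A(v_A)=\prod_{j}(-j)\ne0\). This only gives the bound \(\sum_{i\le k}\binom{n}{i}\).

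To reach \(\binom nk\), we homogenize. Introduce the linear form \(\ell(x)=x_1+\dots+x_n\), so that \(\ell(v_B)=\abs B\). Write \(\abs{A}=\langle v_A,v_A\rangle\). The idea is to replace each \(f_A\) by a degree-\(k\) homogeneous form \(F_A\) in the variables \(x_1,\dots,x_n\). We want \(F_A\) to satisfy two properties:
\[
F_A(v_B)=0 \ \text{for } B\ne A, \qquad F_A(v_A)\ne0.
\]
We also want \(F_A\), after reduction modulo \(x_i^2=x_i\), to lie in the span of the monomials \(x_S\) with \(\abs S=k\), which has dimension \(\binom nk\). The natural device is this: a multilinear polynomial of degree at most \(k\) restricted to the cube can be rewritten as a homogeneous degree-\(k\) multilinear polynomial by multiplying each lower-degree monomial \(x_T\) by a suitable factor built from \(\ell(x)-\abs T\). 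This is valid only when the evaluation points avoid small weights. Concretely, on sets \(B\) with \(\abs B\ge k\), the identity
\[
x_T\binom{\ell(x)-\abs T}{k-\abs T}=\sum_{S\supseteq T,\ \abs S=k}x_S
\]
holds. The factor \(\binom{\ell(x)-\abs T}{k-\abs T}\) is nonzero exactly when \(\abs B\ge k\). So we divide by it where it is nonzero, using the freedom that \(f_A\) is only needed on \(\cF\).

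\textbf{Step 2: reductions on set sizes.} We need all members of \(\cF\) to have size between \(k\) and \(n-k\). Members with \(\abs A<k\) need separate handling. Their number and structure are constrained: if \(\abs A<k\), then \(\abs{A\setminus B}\le\abs A\), and the Sperner condition still applies. By complementation, the condition is symmetric: \(\cF\) is \([k]\)-differencing if and only if \(\{[n]\setminus A\}\) is, since \(\abs{B^c\setminus A^c}=\abs{A\setminus B}\). So large sets are handled by the same argument.

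I plan to treat small sets by a shifting or compression argument that pushes them up to level \(k\) without decreasing \(\abs{\cF}\). Alternatively, I would add auxiliary polynomials in the style of the Frankl–Wilson "extra polynomials" trick, which account for those members inside the \(\binom nk\)-dimensional space. The hypothesis \(n\ge 2k-1\) should enter exactly here. It guarantees that the level-\(k\) space \(\binom nk\) dominates all lower levels, that is, \(\binom nk\ge\binom ni\) for \(i<k\). It also guarantees that inclusion maps between levels are injective, by Gottlieb–Kantor full rank of inclusion matrices. This lets us lift lower-degree monomials injectively into the degree-\(k\) space.

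\textbf{Step 3: independence and equality.} We show that the forms \(F_A\), for \(A\in\cF\), are linearly independent. To do this, order \(\cF\) and use triangularity: evaluating at \(v_B\) gives a diagonal matrix with nonzero diagonal. This yields \(\abs\cF\le\binom nk\).

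For equality, the forms span the whole degree-\(k\) space. Evaluating this spanning property at specific points should force all members of \(\cF\) to have the same size. Once the sizes are equal, a uniform \([k]\)-differencing family in which every pairwise difference is at most \(k\) must be the full level \(\binom{[n]}{k}\). Equivalently, after complementing, it must be \(\binom{[n]}{n-k}\). Uniform families with all differences at most \(k\) at a level \(m\) with \(k<m<n-k\) have diameter at most \(2k\), and these are strictly smaller than \(\binom nk\) by Kleitman's diameter theorem.

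\textbf{Main obstacle.} The main obstacle is Step 2: making the homogenization valid for members of size less than \(k\) or greater than \(n-k\), where the factors \(\binom{\ell-\abs T}{k-\abs T}\) vanish. I would first attempt a direct lifting. This sends each small set \(A\) to a \(k\)-set containing it, chosen via a matching in the inclusion bipartite graph from \(\cF\cap\binom{[n]}{<k}\) to level \(k\), which exists by the LYM inequality. We then check that the \([k]\)-differencing condition with the rest of \(\cF\) is preserved.
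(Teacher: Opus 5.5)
Your plan has a genuine gap, and it lies in Step~1, not in the small-set issue you flag as the main obstacle.

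The identity \(x_T\binom{\ell(x)-\abs{T}}{k-\abs{T}}=\sum_{S\supseteq T,\abs{S}=k}x_S\) lets you replace \(x_T\) by a degree-\(k\) form only after dividing by \(\binom{\abs{B}-\abs{T}}{k-\abs{T}}\). That divisor depends on the weight of the evaluation point \(v_B\). ``Dividing where it is nonzero'' therefore produces a rational function of \(\ell(x)\), not an element of \(\cH_k=\operatorname{span}\{\boldsymbol{x}_S:\abs{S}=k\}\). To stay in \(\cH_k\) you must freeze the divisor at one weight, and then the new form agrees with \(f_A\) on that single level only. So whenever \(\cF\) is not uniform, nothing in your argument gives \(F_A(v_B)=0\) for \(\abs{B}\ne\abs{A}\). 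This persists even after every member has been moved into \(k\le\abs{A}\le n-k\).

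The diagonal evaluation matrix you claim in Step~3 also does not hold for the natural choice. Normalizing at \(r=\abs{A}\) gives \(H_A=\sum_{\abs{T}=k}(-1)^{q}q!\,(r-q-1)!\,\boldsymbol{x}_T\) with \(q=\abs{T\cap A}\). For \(k=2\), \(A=\{1,2,3\}\), \(B=\{1,4\}\) (a \([2]\)-differencing pair), one gets \(H_A(\boldsymbol{1}_B)=-1\).

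The missing idea is that one-sided vanishing suffices, and it does hold. If \(B\ne A\) and \(\abs{B}\ge\abs{A}\), then \(a=\abs{A\setminus B}\le b=\abs{B\setminus A}\le k\). In that case \(H_A(\boldsymbol{1}_B)\) collapses to \(\pm(r-a)!\sum_{t=0}^{b}(-1)^t\binom{b}{t}(r-k+b-t-1)_{a-1}\). This vanishes by the finite-difference identity, because the summand is a polynomial in \(t\) of degree \(a-1<b\). Ordering \(\cF\) by size then gives a triangular, not diagonal, matrix with nonzero diagonal. Your Step~2 lifting (matching the sets of size \(<k\) directly into level \(k\), with Hall's condition from LYM chain counting) is fine for the bound. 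Large sets need no treatment there, since \(H_A\) works for every \(\abs{A}\ge k\).

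The equality part is not yet an argument either. ``The forms span the whole space, so evaluating at specific points forces equal sizes'' has no mechanism behind it. What equality actually yields runs as follows:
\begin{enumerate}
\item Since the evaluation matrix \(CW\) is triangular and nonsingular, the square inclusion matrix \(W\) between \(\binom{[n]}{k}\) and \(\cF\) is invertible. This gives a bijection \(\alpha\) with \(T\subseteq\alpha(T)\).
\item The same argument applied to \(\cF^c\) gives a bijection \(\beta\) with \(T\cap\beta(T)=\varnothing\).
\item The difference bound forces \(\abs{\alpha(T)}=\abs{\beta(T)}\), hence \(\abs{\partial_k\cF_{\le s}}=\abs{\cF_{\le s}}\) for every \(s\).
\item Katona's intersecting shadow theorem, with its equality case, then pins down the lowest level and excludes all the others.
\end{enumerate}
This requires the family to lie in the middle band. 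So you must lift from both sides, and then show separately that for an extremal family the lifts were trivial. A lifted family being a full level does not immediately say the original was; this needs a counting argument using \(n>2k\). The cases \(n\in\{2k-1,2k\}\) are best handled by Sperner's theorem. Finally, for a single uniform level the relevant tool is Katona's intersecting shadow theorem, not Kleitman's diameter theorem, which concerns the whole cube.
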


The cases \(n=2k\) and \(n=2k-1\) follow from Sperner's theorem and its equality cases. The range in Theorem~\ref{thm:main} is best possible: if \(k\ge2\) and \(n=2k-2\), then \(\binom{[n]}{k-1}\) is \([k]\)-differencing, while \(\binom{2k-2}{k-1}>\binom{2k-2}{k}\).

\section{Proof of Theorem~\ref{thm:noninterval}}\label{sec:arbitrary}

For a family \(\cA\subseteq2^{[N]}\), let
\[
D(\cA)=\{\abs{A\triangle B}:A,B\in\cA,A\ne B\}
\]
be its set of nonzero Hamming distances. We shall use two bounds, one classical and one recent. The first is Delsarte's theorem~\cite{Delsarte73}.

\begin{theorem}\label{thm:delsarte}
If \(\cA\subseteq2^{[N]}\) and \(\abs{D(\cA)}=t\), then
\(
\abs{\cA}\le\sum_{i=0}^{t}\binom{N}{i}.
\)
\end{theorem}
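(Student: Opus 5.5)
We would prove Delsarte's bound by the standard polynomial (linear-algebra) method, which is the cleanest route. Identify each \(A\in\cA\) with its characteristic vector \(v_A\in\{0,1\}^N\subseteq\mathbb{R}^N\), and let \(D(\cA)=\{d_1,\ldots,d_t\}\) with every \(d_j\ge1\). For each \(A\in\cA\) we define the polynomial
\[
f_A(x)=\prod_{j=1}^{t}\Bigl(d_j-\sum_{i\in A}(1-x_i)-\sum_{i\notin A}x_i\Bigr),\qquad x=(x_1,\ldots,x_N).
\]
On a \(0/1\) vector \(x=v_B\), the quantity \(\sum_{i\in A}(1-x_i)+\sum_{i\notin A}x_i\) equals \(\abs{A\triangle B}\). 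Consequently \(f_A(v_A)=\prod_j d_j\ne0\), while \(f_A(v_B)=0\) for every \(B\in\cA\) with \(B\ne A\), because \(\abs{A\triangle B}\) is one of the \(d_j\).

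The first step is to show that the functions \(f_A\), restricted to \(\{0,1\}^N\), are linearly independent. Suppose \(\sum_A c_Af_A\) vanishes on \(\{0,1\}^N\). Evaluating at \(v_B\) kills every term except \(c_Bf_B(v_B)\), so \(c_B=0\). The second step is to bound the dimension of the space containing them. On \(\{0,1\}^N\) we have \(x_i^2=x_i\), so we replace each \(f_A\) by its multilinearization \(\tilde f_A\), obtained by reducing every exponent to \(1\). This agrees with \(f_A\) on the cube, so the \(\tilde f_A\) remain linearly independent. Each \(f_A\) is a product of \(t\) affine-linear factors and therefore has degree at most \(t\). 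Multilinearization does not increase degree, so every \(\tilde f_A\) lies in the span of the monomials \(\prod_{i\in S}x_i\) with \(\abs{S}\le t\). That space has dimension \(\sum_{i=0}^{t}\binom{N}{i}\), which gives \(\abs{\cA}\le\sum_{i=0}^{t}\binom{N}{i}\).

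There is no real obstacle here. The only points needing care are that \(0\notin D(\cA)\), which guarantees \(f_A(v_A)\ne0\), and that multilinearization preserves both the values on the cube and the degree bound. Both are immediate from the construction.
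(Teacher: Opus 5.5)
Your proof is correct and complete, including the degenerate case \(t=0\), where the empty product is \(1\).

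Note that the paper does not prove this statement at all. It quotes it as a black box from Delsarte~\cite{Delsarte73}, whose argument is carried out in the Hamming association scheme. That argument uses an annihilator polynomial of degree \(t\) in the distance together with the eigenspace (Krawtchouk) decomposition of the scheme.

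Your polynomial-method proof is the elementary counterpart of that argument. Your \(f_A\) plays the role of the annihilator. The space of multilinear polynomials of degree at most \(t\) on \(\{0,1\}^N\) is exactly the sum of the first \(t+1\) eigenspaces of the scheme. So both arguments bound \(\abs{\cA}\) by the same dimension, yours in the monomial basis rather than the character basis.

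The scheme-theoretic route sits in a framework that treats other polynomial association schemes uniformly and connects to Delsarte's linear programming bounds. Your route is self-contained. It is also precisely the template that the paper sharpens in its proof of Theorem~\ref{thm:main}. There the diagonal evaluation matrix is relaxed to a triangular one (Proposition~\ref{prop:triangular-criterion}). The non-homogeneous space of dimension \(\sum_{i\le k}\binom{n}{i}\) is replaced by the homogeneous space \(\cH_k\) of dimension \(\binom{n}{k}\).
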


The second input is a recent theorem of Dong, Gao, Liu, Ouyang, and Zhou~\cite[Theorem~1.4]{DGLOZ26}.

\begin{theorem}\label{thm:restricted-distances}
Let \(D\) be a fixed set of \(t\) positive integers. If \(D\ne\{2,4,\ldots,2t\}\), then every family \(\cA\subseteq2^{[N]}\) with \(D(\cA)=D\) satisfies, as \(N\to\infty\),
\[
\abs{\cA}\le\left(\frac{t}{t+1}+o_{D}(1)\right)\binom{N}{t}.
\]
\end{theorem}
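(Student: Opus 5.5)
This statement is quoted from \cite[Theorem~1.4]{DGLOZ26}, and the paper uses it as a black box. Below is how I would try to prove it directly; I have not checked the key steps.

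The plan has three stages: locate the family near a few levels, pass to kernels via a delta-system argument, and finish with a counting argument on \((t+1)\)-sets. First I would reduce to the case where \(\cA\) is concentrated near few levels of the cube. Write \(D=\{d_1<\cdots<d_t\}\), and assume \(\abs{\cA}\ge c\binom{N}{t}\) for a small fixed \(c\), since otherwise there is nothing to prove. Delsarte's bound (Theorem~\ref{thm:delsarte}) already gives \(\abs{\cA}\le(1+o(1))\binom{N}{t}\), so the whole task is to gain the factor \(t/(t+1)\). Partition \(\cA\) into layers \(\cA_j=\cA\cap\binom{[N]}{j}\). Inside one layer all distances are even, and they correspond to the intersection sizes \(j-d/2\) for even \(d\in D\). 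So each layer is a uniform family whose intersection sizes lie in a set of size at most \(t_{\mathrm{even}}=\abs{D\cap2\mathbb{Z}}\). The Deza--Erd\H{o}s--Frankl bound then gives \(\abs{\cA_j}=O(N^{t_{\mathrm{even}}})\), with leading constant at most \(\binom{N}{t}\) only when the intersection sizes are consecutive. Since \(D\) is bounded, any two sets of \(\cA\) have sizes differing by at most \(\max D\). Hence only \(O_D(1)\) layers are occupied, and they lie in a window of bounded width.

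Second, I would apply Füredi's delta-system method to the union of these boundedly many layers. Passing to a subfamily of proportion \(1-o(1)\), every set \(A\) acquires a \emph{kernel} \(K(A)\subseteq A\) with \(\abs{K(A)}\le t\). The sets are recovered from their kernels essentially injectively, and the map \(A\mapsto K(A)\) is injective up to \(O(1)\) multiplicity. In addition, the intersection pattern of any two sets is determined, up to a bounded error, by the intersection of their kernels. Under the distance restriction, the kernel intersection sizes can take only \(t\) values. Consequently only kernels of size exactly \(t\) contribute to the main term, and the number of sets is at most \((1+o(1))\) times the number of distinct \(t\)-kernels.

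Third, and this is the main obstacle, I must show that when \(D\neq\{2,4,\ldots,2t\}\) the \(t\)-kernels cannot fill more than a \(t/(t+1)+o(1)\) fraction of \(\binom{[N]}{t}\). The mechanism I would aim for is a local obstruction inside every \((t+1)\)-set \(T\). If all \(t+1\) of its \(t\)-subsets were kernels, the corresponding sets \(A\) would realise every kernel intersection size \(t-1\) pairwise. Through the delta-system structure, their pairwise distances would then have to be \(2,4,\ldots\) in a pattern forcing \(D=\{2,\ldots,2t\}\). This uses induction on \(t\) applied to the trace of the family on sets whose kernels all contain a fixed \((t-1)\)-set. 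So every \((t+1)\)-set misses at least one kernel among its \(t\)-subsets. Double counting pairs (\(t\)-kernel, \((t+1)\)-superset) then gives at most \(\frac{t}{t+1}\binom{N}{t}\) kernels, up to lower-order terms.

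The delicate case is when \(D\) contains odd distances, so that sets lie on different levels. There I would treat the parity classes of \(\abs{A}\) separately. I would argue that mixing two parities forces one class to have lower kernel rank, which makes its contribution \(o(\binom{N}{t})\) and lets it be absorbed into the error term. I would try this parity splitting first, before attempting a more refined weighted count.
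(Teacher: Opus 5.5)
The paper does not prove this statement. It is quoted from \cite[Theorem~1.4]{DGLOZ26} and used as a black box, so there is no in-paper route to compare with. Judged on its own, your sketch breaks at the step you flag as the main obstacle, because the local obstruction is false. Take $t=1$ and $D=\{4\}\ne\{2\}$. Fix points $1,2$ and disjoint pairs $P_{1},\ldots,P_{m}\subseteq[N]\setminus\{1,2\}$ with $m=\lfloor(N-2)/2\rfloor$, and let $\cA=\{\varnothing\}\cup\{\{1,2\}\cup P_{i}:1\le i\le m\}$. All distances equal $4$ and $\abs{\cA}=(\frac12+o(1))N$, so $\cA$ is asymptotically extremal. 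A set of size at most $t=1$ that identifies $\{1,2\}\cup P_{i}$, even up to bounded multiplicity, must be a point of $P_{i}$. So there are $\Omega(N)$ distinct kernels, and any two of them form a $(t+1)$-set all of whose $t$-subsets are kernels. Yet the corresponding members are at distance $4$, not $2$. For $t=1$ your obstruction would permit at most one kernel, hence $\abs{\cA}=O(1)$. What goes wrong is that kernels do not control distances: the shared part $\{1,2\}$ lies outside every kernel, has size $2>t$, and shifts all distances. The constant $\frac12$ here comes from each member owning two singletons (both points of $P_{i}$) that lie in no other member. It does not come from a Tur\'an-type condition on a one-kernel-per-member hypergraph. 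A correct count must handle members that own several such $t$-subsets, for instance through a fractional weighting, and this is exactly where $D\ne\{2,4,\ldots,2t\}$ has to be used. The unspecified induction on $t$ does not supply it.

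Earlier steps also need repair. The occupied layers lie in a window of bounded width, but the window may sit at height of order $N$: the complements of the members above form another family with distance set $\{4\}$. There F\"uredi's method, which needs bounded uniformity, does not apply, and no set of size at most $t$ identifies a member. Your Deza--Erd\H{o}s--Frankl step in stage one has the same defect. You should first replace $\cA$ by $\{A\triangle A_{0}:A\in\cA\}$ for a fixed $A_{0}\in\cA$; this preserves all distances and puts every member on a level in $D\cup\{0\}$. Three further points remain. F\"uredi's structure theorem yields a homogeneous subfamily of proportion $c(k)>0$, not $1-o(1)$, so it cannot by itself preserve a sharp leading constant. ``Injective up to $O(1)$ multiplicity'' does not give the $(1+o(1))$ factor you then use. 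And the parity-splitting claim at the end is unsupported.
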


The next lemma is the simple reduction from directed differences to ordinary Hamming distances. The point is that all sets can be put on one level by adding only a bounded number of new coordinates.

\begin{lemma}\label{lem:uniformization}
Let \(L=\{\ell_{1}<\cdots<\ell_{s}\}\), and let \(\varnothing\ne\cF\subseteq2^{[n]}\) be \(L\)-differencing. Then there is an integer \(q\), with \(0\le q\le\ell_{s}-\ell_{1}\), and an injective map \(\Phi:\cF\to2^{[n+q]}\) such that
\[
D(\Phi(\cF))\subseteq2L=\{2\ell:\ell\in L\}.
\]
\end{lemma}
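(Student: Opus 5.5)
Write \(m=\min_{A\in\cF}\abs{A}\) and \(M=\max_{A\in\cF}\abs{A}\). The plan is to pad every member of \(\cF\) up to the common size \(M\), using \(q=M-m\) new coordinates \(n+1,\ldots,n+q\). For each \(A\in\cF\) set
\[
\Phi(A)=A\cup\{n+1,\ldots,n+M-\abs{A}\},
\]
so that \(\abs{\Phi(A)}=M\) for all \(A\). The map is injective because \(\Phi(A)\cap[n]=A\).

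\textbf{Step 1: bound \(q\).} For distinct \(A,B\in\cF\) we have \(\abs{A}-\abs{B}=\abs{A\setminus B}-\abs{B\setminus A}\). Both terms lie in \(L\subseteq[\ell_1,\ell_s]\), so \(\bigl|\abs{A}-\abs{B}\bigr|\le\ell_s-\ell_1\). Hence \(q=M-m\le\ell_s-\ell_1\), and this also covers the case \(\abs{\cF}=1\), where \(q=0\).

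\textbf{Step 2: compute the distances.} Take distinct \(A,B\) with \(\abs{A}\ge\abs{B}\). The added blocks are initial segments of the new coordinates, so the block of \(B\) contains the block of \(A\). Therefore \(\Phi(A)\setminus\Phi(B)=A\setminus B\) exactly. Since \(\Phi(A)\) and \(\Phi(B)\) have the same size, \(\abs{\Phi(B)\setminus\Phi(A)}=\abs{\Phi(A)\setminus\Phi(B)}\). Consequently
\[
\abs{\Phi(A)\triangle\Phi(B)}=2\abs{A\setminus B}\in 2L.
\]

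The only point needing care is the orientation in Step 2. We must take the set whose difference is computed to be the larger one, \(A\). Then the padding contributes nothing to \(\Phi(A)\setminus\Phi(B)\), while all of the padding asymmetry sits on the other side and is absorbed by the equal-size identity. Apart from this, the argument is routine.
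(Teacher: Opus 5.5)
Your proof is correct and is essentially the paper's argument. Both pad each \(A\) with an initial segment of \(M-\abs{A}\) new points, bound \(q\) via \(\abs{A}-\abs{B}=\abs{A\setminus B}-\abs{B\setminus A}\), and use the equal sizes of the images to get \(\abs{\Phi(A)\triangle\Phi(B)}=2\abs{A\setminus B}\) for the larger set \(A\). The paper computes the directed difference from the smaller set's side instead, obtaining \(x+(b-a)=y\); this is a cosmetic difference.
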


\begin{proof}
Put \(r=\min_{A\in\cF}\abs{A}\), \(R=\max_{A\in\cF}\abs{A}\), and \(q=R-r\). If \(q>0\), choose \(A_{0},B_{0}\in\cF\) with \(\abs{A_{0}}=r\) and \(\abs{B_{0}}=R\). These two sets are distinct, so both directed differences belong to \(L\), and hence
\[
q=\abs{B_{0}\setminus A_{0}}-\abs{A_{0}\setminus B_{0}}\le\ell_{s}-\ell_{1},
\]
where the last inequality uses \(\abs{B_{0}\setminus A_{0}}\le\ell_{s}\) and \(\abs{A_{0}\setminus B_{0}}\ge\ell_{1}\).
If \(q=0\), the same bound is immediate. In either case, add new elements \(p_{1},\ldots,p_{q}\). For \(r\le h\le R\), put \(P_{h}=\{p_{1},\ldots,p_{R-h}\}\), and define
\[
\Phi(A)=A\cup P_{\abs{A}}.
\]
This map is injective, and every image has size \(R\).

Let \(A,B\in\cF\) be distinct, with \(a=\abs{A}\le b=\abs{B}\). Put \(x=\abs{A\setminus B}\) and \(y=\abs{B\setminus A}\). Since \(b-a=y-x\) and \(P_{a}\supseteq P_{b}\), we have
\[
\abs{\Phi(A)\setminus\Phi(B)}=x+(b-a)=y=\abs{\Phi(B)\setminus\Phi(A)}.
\]
Therefore \(\abs{\Phi(A)\triangle\Phi(B)}=2y\in2L\). This proves the lemma.
\end{proof}

\begin{proof}[Proof of Theorem~\ref{thm:noninterval}]
We may assume \(\abs{\cF}\ge2\). Apply Lemma~\ref{lem:uniformization}, put \(N=n+q\), and let \(D=D(\Phi(\cF))\). Write \(t=\abs{D}\). Since \(D\subseteq2L\), we have \(t\le s\).

If \(t\le s-1\), then Theorem~\ref{thm:delsarte} gives
\[
\abs{\cF}\le\sum_{i=0}^{t}\binom{N}{i}=O_{L}(n^{s-1})=o_{L}\left(\binom{n}{s}\right).
\]
It remains to consider \(t=s\). In this case \(D\subseteq2L\) and \(\abs{D}=\abs{2L}\), so \(D=2L\). Since \(L\ne[s]\), we have \(D\ne\{2,4,\ldots,2s\}\). Theorem~\ref{thm:restricted-distances} gives
\[
\abs{\cF}\le\left(\frac{s}{s+1}+o_{L}(1)\right)\binom{N}{s}.
\]
Finally, \(q\le\ell_{s}-\ell_{1}=O_{L}(1)\), and hence \(\binom{N}{s}=(1+O_{L}(n^{-1}))\binom{n}{s}\). This proves the theorem.
\end{proof}

\section{Proof of Theorem~\ref{thm:main}}
Since the cases \(n=2k\) and \(n=2k-1\) follow from Sperner's theorem, we always assume that \(n>2k\).
\subsection{Sharp upper bound}
 For \(A\subseteq[n]\), write \(\boldsymbol{1}_{A}\) for its characteristic vector. If \(\boldsymbol{x}=(x_{1},\ldots,x_{n})\) and \(S\subseteq[n]\), write \(\boldsymbol{x}_{S}=\prod_{i\in S}x_{i}\), with \(\boldsymbol{x}_{\varnothing}=1\). For \(z\in\mathbb{R}\) and \(j\ge0\), we use the convention \(\binom{z}{j}=\frac{z(z-1)\cdots(z-j+1)}{j!}\). We also write \((z)_{d}=z(z-1)\cdots(z-d+1)\), with \((z)_{0}=1\), and let \([z^{d}]f(z)\) denote the coefficient of \(z^{d}\) in a polynomial or formal power series \(f(z)\).

We start by listing the three basic tools used in the proof. If \(G\) is a bipartite graph with parts \(X\) and \(Y\), a matching from \(X\) into \(Y\) means a set of disjoint edges covering every vertex of \(X\). For \(U\subseteq X\), let \(N(U)\) be the set of vertices in \(Y\) adjacent to at least one vertex of \(U\).

\begin{theorem}[\cite{Hall35}]\label{thm:hall}
Let \(G\) be a bipartite graph with parts \(X\) and \(Y\). There is a matching from \(X\) into \(Y\) if and only if \(\abs{N(U)}\ge\abs{U}\) for every \(U\subseteq X\).
\end{theorem}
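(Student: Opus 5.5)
The statement to prove is Hall's marriage theorem. Necessity is immediate. If a matching \(M\) from \(X\) into \(Y\) exists, then the partners in \(M\) of the vertices of any \(U\subseteq X\) are \(\abs{U}\) distinct vertices of \(N(U)\), so \(\abs{N(U)}\ge\abs{U}\).

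For sufficiency I will use induction on \(\abs{X}\). The case \(\abs{X}\le1\) is trivial: if \(X=\{x\}\), the condition \(\abs{N(\{x\})}\ge1\) already provides an edge. For the inductive step I split into two cases, according to whether Hall's condition holds with slack on every proper nonempty subset.

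\textbf{Case 1.} Suppose \(\abs{N(U)}\ge\abs{U}+1\) for every nonempty proper subset \(U\subsetneq X\).
\begin{itemize}
\item Pick any edge \(xy\) with \(x\in X\); one exists because \(\abs{N(\{x\})}\ge1\).
\item Delete \(x\) and \(y\). For \(U\subseteq X\setminus\{x\}\), removing \(y\) lowers \(\abs{N(U)}\) by at most one, so \(\abs{N(U)\setminus\{y\}}\ge\abs{U}\).
\item Hall's condition therefore holds in the smaller graph, so induction gives a matching of \(X\setminus\{x\}\) avoiding \(y\). Adding \(xy\) matches all of \(X\).
\end{itemize}

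\textbf{Case 2.} Suppose some nonempty proper \(U_0\subsetneq X\) satisfies \(\abs{N(U_0)}=\abs{U_0}\).
\begin{itemize}
\item The graph induced on \(U_0\cup N(U_0)\) satisfies Hall's condition, since every subset of \(U_0\) has all its neighbours inside \(N(U_0)\). By induction it has a matching \(M_1\) of \(U_0\) into \(N(U_0)\).
\item Consider the graph induced on \((X\setminus U_0)\cup(Y\setminus N(U_0))\). For \(W\subseteq X\setminus U_0\), its neighbourhood there is \(N(W)\setminus N(U_0)\). Since \(N(W\cup U_0)=(N(W)\setminus N(U_0))\cup N(U_0)\),
\[
\abs{N(W)\setminus N(U_0)}=\abs{N(W\cup U_0)}-\abs{N(U_0)}\ge\abs{W}+\abs{U_0}-\abs{U_0}=\abs{W}.
\]
So Hall's condition holds, and induction gives a matching \(M_2\) of \(X\setminus U_0\) into \(Y\setminus N(U_0)\).
\item \(M_1\) and \(M_2\) use disjoint parts of \(Y\), so \(M_1\cup M_2\) is a matching from \(X\) into \(Y\).
\end{itemize}

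The main obstacle is the counting identity in Case 2, which shows that the tight set can be peeled off without breaking Hall's condition for the rest. Everything else is bookkeeping.
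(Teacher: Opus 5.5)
Your proof is correct. The paper gives no proof of this statement: Hall's theorem is quoted from \cite{Hall35} and used as a black box, once, in Proposition~\ref{prop:lifting}. There, Hall's condition for the inclusion graph between $\cF_r$ and $\binom{[n]}{r+1}$ is checked by double counting, so there is no route of the paper's to compare yours against.

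What you wrote is the standard Halmos--Vaughan induction, and each step holds.
\begin{itemize}
\item The two cases are exhaustive. For $\abs{X}\ge2$, any nonempty proper $U_0$ without slack must satisfy $\abs{N(U_0)}=\abs{U_0}$, because Hall's condition rules out $\abs{N(U_0)}<\abs{U_0}$.
\item In Case~1, deleting $y$ costs each nonempty $U\subseteq X\setminus\{x\}$ at most one neighbour. Such a $U$ is a proper subset of $X$, so it had a neighbour to spare.
\item In Case~2, both subproblems have strictly smaller $X$-side, because $U_0$ is nonempty and proper. The identity $\abs{N(W)\setminus N(U_0)}=\abs{N(W\cup U_0)}-\abs{N(U_0)}$ holds because $N(W\cup U_0)=N(W)\cup N(U_0)$. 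Combined with $\abs{W\cup U_0}=\abs{W}+\abs{U_0}$, which holds since $W\cap U_0=\varnothing$, this is exactly what transfers Hall's condition to the remaining part.
\end{itemize}

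Your induction on $\abs{X}$ assumes $X$ is finite. That is the only setting in which the statement is true, and it is the setting of the paper's application, where $X=\cF_r$ is finite. Your argument makes the paper's use of the theorem self-contained. Nothing downstream depends on which proof of Hall's theorem is used.
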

The next proposition is a standard linear algebra step in the polynomial method; see, for example, Babai and Frankl~\cite{BabaiFrankl92}.
\begin{prop}\label{prop:triangular-criterion}
Let \(f_{1},\ldots,f_{m}\) be functions on a common set \(\cD\), and let \(\boldsymbol{v}_{1},\ldots,\boldsymbol{v}_{m}\in\cD\). If \(f_{i}(\boldsymbol{v}_{i})\ne0\) for every \(i\) and \(f_{i}(\boldsymbol{v}_{j})=0\) whenever \(i<j\), then \(f_{1},\ldots,f_{m}\) are linearly independent.
\end{prop}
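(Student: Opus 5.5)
The statement to prove is Proposition~\ref{prop:triangular-criterion}: if $f_i(\boldsymbol{v}_i)\ne0$ for all $i$ and $f_i(\boldsymbol{v}_j)=0$ whenever $i<j$, then $f_1,\ldots,f_m$ are linearly independent. I would prove it directly from the definition, by evaluating a vanishing linear combination at the points $\boldsymbol{v}_1,\ldots,\boldsymbol{v}_m$ one at a time and killing the coefficients from the top index downward.

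Suppose $\sum_{i=1}^{m}c_i f_i\equiv 0$ on $\cD$ for some scalars $c_1,\ldots,c_m$. We show by downward induction on $j=m,m-1,\ldots,1$ that $c_j=0$. Assume $c_{j+1}=\cdots=c_m=0$ is already known; for $j=m$ this assumption is vacuous. Evaluate the identity at $\boldsymbol{v}_j$:
\[
0=\sum_{i=1}^{m}c_i f_i(\boldsymbol{v}_j)=\sum_{i<j}c_i f_i(\boldsymbol{v}_j)+c_j f_j(\boldsymbol{v}_j)+\sum_{i>j}c_i f_i(\boldsymbol{v}_j).
\]
The first sum vanishes because $f_i(\boldsymbol{v}_j)=0$ for every $i<j$. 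The last sum vanishes by the induction hypothesis. Hence $c_j f_j(\boldsymbol{v}_j)=0$, and since $f_j(\boldsymbol{v}_j)\ne0$ we get $c_j=0$.

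After the step $j=1$ all coefficients are zero, so the $f_i$ are linearly independent.

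Equivalently, the $m\times m$ matrix $M$ with entries $M_{ji}=f_i(\boldsymbol{v}_j)$ has zeros whenever $i<j$. It is therefore upper triangular, and its diagonal entries are all nonzero, so it is nonsingular. A vanishing combination gives $M\boldsymbol{c}=0$, which forces $\boldsymbol{c}=0$.

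There is no real obstacle here. The only point needing care is the direction of the induction: the hypothesis $f_i(\boldsymbol{v}_j)=0$ for $i<j$ means we must start at $j=m$ and go down.
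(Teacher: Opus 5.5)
Your proof is correct and is essentially the paper's argument: the paper takes the largest index \(j\) with \(c_{j}\ne0\) and evaluates at \(\boldsymbol{v}_{j}\), which is the same top-down elimination you carry out as a downward induction. Your upper-triangular matrix reformulation, with \(M_{ji}=f_{i}(\boldsymbol{v}_{j})\), is also correct.
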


\begin{proof}[Proof of Lemma~\ref{prop:triangular-criterion}]
Suppose that \(\sum_{i=1}^{m}c_{i}f_{i}=0\), and let \(j\) be the largest index for which \(c_{j}\ne0\). Evaluating at \(\boldsymbol{v}_{j}\), all terms with index smaller than \(j\) vanish by assumption, while all terms with larger index have zero coefficient. We obtain \(c_{j}f_{j}(\boldsymbol{v}_{j})=0\), a contradiction.
\end{proof}

We will also take advantage of the following simple formula.
\begin{fact}\label{fact:finite-difference}
If \(b\ge1\) and \(f\) is a polynomial of degree less than \(b\), then \(\sum_{t=0}^{b}(-1)^{t}\binom{b}{t}f(t)=0\).
\end{fact}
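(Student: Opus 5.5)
The statement to prove is Fact~\ref{fact:finite-difference}: if \(b\ge1\) and \(\deg f<b\), then \(\sum_{t=0}^{b}(-1)^{t}\binom{b}{t}f(t)=0\). The plan is to reduce to a basis of the polynomials of degree less than \(b\), using that the sum is linear in \(f\). The convenient basis is the falling factorials \((z)_{d}\) for \(0\le d\le b-1\). These have exact degree \(d\), so they span all polynomials of degree at most \(b-1\). It therefore suffices to show that \(\sum_{t=0}^{b}(-1)^{t}\binom{b}{t}(t)_{d}=0\) for each \(0\le d<b\).

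For fixed \(d\), the terms with \(t<d\) vanish because \((t)_{d}=0\) there. For \(t\ge d\) I would use the identity \(\binom{b}{t}(t)_{d}=(b)_{d}\binom{b-d}{t-d}\), which follows by writing both sides in factorials. Substituting \(u=t-d\), the sum becomes
\[
(b)_{d}(-1)^{d}\sum_{u=0}^{b-d}(-1)^{u}\binom{b-d}{u}=(b)_{d}(-1)^{d}(1-1)^{b-d}.
\]
This equals \(0\) because \(b-d\ge1\).

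An alternative is to view the sum as \((-1)^{b}\Delta^{b}f(0)\), where \(\Delta\) is the forward difference operator. Each application of \(\Delta\) lowers the degree by at least one, so \(\Delta^{b}f=0\). Either argument works. The only step needing any care is the factorial identity, and it is routine.
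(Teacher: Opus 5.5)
Your proof is correct and is essentially the paper's argument. The paper uses the basis \(\binom{t}{d}\) and the identity \(\binom{b}{t}\binom{t}{d}=\binom{b}{d}\binom{b-d}{t-d}\), which is exactly your falling-factorial identity divided by \(d!\), and then finishes with the same \((1-1)^{b-d}=0\) step.
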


\begin{proof}[Proof of Fact~\ref{fact:finite-difference}]
Every polynomial of degree less than \(b\) is a linear combination of \(\binom{t}{d}\) for \(0\le d<b\). It is therefore enough to use \(\binom{b}{t}\binom{t}{d}=\binom{b}{d}\binom{b-d}{t-d}\) and compute
\[
\sum_{t=0}^{b}(-1)^{t}\binom{b}{t}\binom{t}{d}=(-1)^{d}\binom{b}{d}\sum_{u=0}^{b-d}(-1)^{u}\binom{b-d}{u}=0.
\]
\end{proof}

Now we formally turn to the proof of Theorem~\ref{thm:main}. We first remove a small technical issue: the polynomial construction below works cleanly once every set has size at least \(k\), and Hall's theorem lets us enforce this without changing the size of the family.

\begin{prop}\label{prop:lifting}
Let \(n>2k\), and let \(\cF\subseteq2^{[n]}\) be \([k]\)-differencing. There is a \([k]\)-differencing family \(\cF'\subseteq2^{[n]}\) such that \(\abs{\cF'}=\abs{\cF}\) and \(\abs{A}\ge k\) for all \(A\in\cF'\).
\end{prop}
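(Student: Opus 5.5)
The plan is to raise the small sets one level at a time, using Hall's theorem (Theorem~\ref{thm:hall}) at each step. Let \(r=\min_{A\in\cF}\abs{A}\). If \(r\ge k\) there is nothing to do. Otherwise let \(\cF_{r}\) be the members of size \(r\). We replace each \(A\in\cF_{r}\) by a distinct \((r+1)\)-set \(\phi(A)\supseteq A\), and keep all other members. We then check that the new family is still \([k]\)-differencing and has the same size. Repeating this, the minimum size increases by one each round, so after at most \(k\) rounds every set has size at least \(k\).

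First, the matching. Let \(G\) be the bipartite graph with \(X=\cF_{r}\) and \(Y=\binom{[n]}{r+1}\), where \(A\sim C\) if and only if \(A\subset C\). Every \(A\in X\) has degree \(n-r\), and every \(C\in Y\) has degree at most \(r+1\). For \(U\subseteq X\), double counting the edges leaving \(U\) gives
\[
(r+1)\abs{N(U)}\ge(n-r)\abs{U}.
\]
Since \(r\le k-1\) and \(n>2k\), we have \(n-r\ge r+1\), so \(\abs{N(U)}\ge\abs{U}\). Theorem~\ref{thm:hall} then gives an injective \(\phi\) with \(A\subset\phi(A)\) and \(\abs{\phi(A)}=r+1\). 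Put \(\cF'=(\cF\setminus\cF_{r})\cup\phi(\cF_{r})\).

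The only real content is verifying that \(\cF'\) is \([k]\)-differencing and that \(\abs{\cF'}=\abs{\cF}\). The key observation is that, because \(\cF\) is Sperner, no \(B\in\cF\setminus\cF_{r}\) contains any \(A\in\cF_{r}\).

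\emph{New set against an old set.} Take \(C=\phi(A)\) and \(B\in\cF\setminus\cF_{r}\), so \(\abs{B}\ge r+1\).
\begin{itemize}
\item If \(C\subseteq B\), then \(A\subset B\), which is impossible. So \(C\not\subseteq B\), and in particular \(C\ne B\). Hence \(\abs{C\setminus B}\ge1\), and also \(\abs{C\setminus B}\le\abs{C}=r+1\le k\).
\item Since \(\abs{B}\ge\abs{C}\) and \(B\ne C\), we get \(B\not\subseteq C\), so \(\abs{B\setminus C}\ge1\).
\item Finally \(\abs{B\setminus C}\le\abs{B\setminus A}\le k\), because \(A\subseteq C\) and \(A\ne B\) in the original family.
\end{itemize}
The inequality \(C\ne B\) also shows that \(\phi(\cF_{r})\) is disjoint from \(\cF\setminus\cF_{r}\). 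Together with the injectivity of \(\phi\), this gives \(\abs{\cF'}=\abs{\cF}\).

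\emph{Two new sets.} Two distinct images \(\phi(A_{1}),\phi(A_{2})\) have the same size \(r+1\le k\). Their directed differences are therefore equal, nonzero, and at most \(r+1\le k\).

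\emph{Two old sets.} Pairs within \(\cF\setminus\cF_{r}\) are unchanged.

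So \(\cF'\) is \([k]\)-differencing, has the same size as \(\cF\), and has minimum set size \(r+1\). Iterating until the minimum size reaches \(k\) proves Proposition~\ref{prop:lifting}.

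I expect no serious obstacle. The two points that need care are the degree inequality \(n-r\ge r+1\), which is exactly where the hypothesis \(n>2k\) is used, and noticing that Sperner-ness rules out every bad containment automatically. Because of the latter, the bipartite graph needs no restrictions beyond plain inclusion.
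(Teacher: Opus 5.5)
Your proposal is correct and follows essentially the same argument as the paper. Both apply Hall's theorem to the inclusion graph from the minimum level \(\cF_{r}\) into level \(r+1\), then check the new-versus-old and new-versus-new pairs using Sperner-ness and \(\abs{B\setminus\phi(A)}\le\abs{B\setminus A}\), and iterate. The only cosmetic difference is that the paper first verifies the antichain property separately, while you obtain the lower bounds \(\abs{C\setminus B},\abs{B\setminus C}\ge1\) directly.
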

\begin{proof}[Proof of Proposition~\ref{prop:lifting}]
Suppose the minimum size of a member of \(\cF\) is \(r<k\), and put \(\cF_{r}=\cF\cap\binom{[n]}{r}\). Consider the bipartite inclusion graph between \(\cF_{r}\) and \(\binom{[n]}{r+1}\). If \(\cG\subseteq\cF_{r}\) and \(N(\cG)\) is its upper neighborhood, then counting edges gives
\[
(n-r)\abs{\cG}\le(r+1)\abs{N(\cG)}.
\]
Since \(r<k\) and \(n>2k\), we have \(n-r>r+1\), and hence \(\abs{N(\cG)}\ge\abs{\cG}\). By Theorem~\ref{thm:hall}, there is an injection \(\phi:\cF_{r}\to\binom{[n]}{r+1}\) with \(A\subseteq\phi(A)\) for every \(A\in\cF_{r}\).

Replace each \(A\in\cF_{r}\) by \(A^{+}=\phi(A)\), and leave all other members unchanged. The injection \(\phi\) makes the lifted sets distinct. Moreover, no lifted set \(A^{+}\) is an unchanged member of \(\cF\), since \(A\subset A^{+}\) would then contradict that \(\cF\) is an antichain.

The new family is again an antichain. If \(B\) is unchanged, then \(A^{+}\subseteq B\) would imply \(A\subset B\). In the other direction, \(B\subseteq A^{+}\) and the minimality of \(r\) give \(r+1\le\abs{B}\le\abs{A^{+}}=r+1\), so \(B=A^{+}\), again contradicting \(A\subset B\). Two lifted sets are distinct and have the same size, so they are also incomparable.

It remains to check the difference condition. All directed differences are positive because the new family is an antichain. If \(B\) is unchanged, then \(\abs{B\setminus A^{+}}\le\abs{B\setminus A}\le k\) and \(\abs{A^{+}\setminus B}\le r+1\le k\). For two lifted sets, both directed differences are at most \(r+1\le k\). Thus the new family is \([k]\)-differencing. Repeating this step until the minimum size reaches \(k\) proves the proposition.
\end{proof}

\begin{claim}\label{clm:lifting-trajectory}
Follow a set through the successive replacements in one complete run of the lifting procedure. If the set is replaced at least once, then its final replacement has size \(k\).
\end{claim}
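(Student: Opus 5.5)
The plan is to track the minimum size of the family through one complete run of the lifting procedure of Proposition~\ref{prop:lifting}, and to show that a set, once lifted, is carried along in every later step.

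Index the steps of the run by the current minimum size. At the step where the minimum is \(r\), exactly the members of size \(r\) are replaced, each by a set of size \(r+1\). All other members have size at least \(r+1\) and are left unchanged. So after this step the new minimum is at least \(r+1\).

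Every lifted set has size exactly \(r+1\), and we already know the minimum is at least \(r+1\). Hence the minimum after the step is exactly \(r+1\). This also shows that the successive minima are \(r_0, r_0+1, \ldots, k\) with no jumps, where \(r_0\) is the initial minimum. The run terminates precisely when the minimum reaches \(k\); it cannot overshoot, since every step raises the minimum by exactly one.

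Now follow a set \(A\) that is replaced for the first time at the step with minimum \(r\). At that point \(\abs{A}=r\), and its replacement \(A^{+}\) has size \(r+1\), which is the new minimum.

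\begin{itemize}
\item If \(r+1<k\), the procedure continues. At the next step, every member of minimal size \(r+1\) is replaced, and this includes the current image of \(A\). So \(A\) is replaced again.
\item Inductively, the image of \(A\) always has the current minimum size. It is therefore replaced at every subsequent step until the minimum equals \(k\).
\end{itemize}

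When the run stops, the final image of \(A\) has size equal to the final minimum, which is \(k\). Note also that a set with original size at least \(k\) is never replaced, which is consistent with the claim.

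The only point requiring care is that lifted sets never become larger than the current minimum. This holds because lifting raises size by exactly one, and sizes that are not minimal are untouched. Given this, the induction is routine. I expect no real obstacle beyond stating the invariant precisely: after each step of the run, every set that has been replaced at least once has size equal to the current minimum.
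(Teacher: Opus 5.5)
Your proposal is correct and follows the paper's argument: every set on the current minimum level is lifted by exactly one, so the minimum rises by exactly one per step. A replaced set therefore always sits on the minimum level and is lifted again at each step until its size reaches \(k\). Your stated invariant just makes precise what the paper's ``continuing in this way'' leaves implicit.
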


\begin{poc}
Suppose the set is first replaced when the current minimum size is \(r<k\). Its replacement has size \(r+1\). Since every set on level \(r\) is replaced at that step, the new minimum size is \(r+1\), so this replacement is lifted again at the next step unless \(r+1=k\). Continuing in this way, its successive replacements have sizes \(r+1,r+2,\ldots,k\).
\end{poc}

Replacing \(\cF\) by the family supplied by Proposition~\ref{prop:lifting} does not change its size. From now on, we may therefore assume that \(\abs{A}\ge k\) for every \(A\in\cF\). We also record a simple consequence of the \([k]\)-differencing property.
\begin{claim}\label{clm:basic-differences}
If \(A,B\in\cF\) are distinct, \(\abs{A}\le\abs{B}\), \(a=\abs{A\setminus B}\), and \(b=\abs{B\setminus A}\), then \(1\le a\le b\le k\).
\end{claim}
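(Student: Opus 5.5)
The claim follows directly from the definition of a \([k]\)-differencing family and one counting identity; no earlier machinery is needed. Let \(A,B\in\cF\) be distinct with \(\abs{A}\le\abs{B}\), and set \(a=\abs{A\setminus B}\) and \(b=\abs{B\setminus A}\). We establish the three inequalities \(1\le a\), \(a\le b\) and \(b\le k\) separately.

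First, since \(A\) and \(B\) are distinct members of a \([k]\)-differencing family, both ordered pairs \((A,B)\) and \((B,A)\) satisfy the defining condition. Hence \(a=\abs{A\setminus B}\in[k]\) and \(b=\abs{B\setminus A}\in[k]\). This gives \(1\le a\le k\) and \(1\le b\le k\), so the lower bound on \(a\) and the upper bound on \(b\) are immediate.

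Second, for the middle inequality, decompose both sets along their intersection:
\[
\abs{A}=\abs{A\cap B}+a,\qquad \abs{B}=\abs{A\cap B}+b.
\]
Subtracting gives \(b-a=\abs{B}-\abs{A}\ge 0\), so \(a\le b\). Combining this with the first step yields \(1\le a\le b\le k\).

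The only point requiring any care is to invoke the difference condition for both orderings of the pair, since \(L\)-differencing is defined over ordered pairs. The claim holds for any \([k]\)-differencing family. It therefore does not depend on the normalization \(\abs{A}\ge k\) from Proposition~\ref{prop:lifting}, and it applies equally to the family obtained after lifting.
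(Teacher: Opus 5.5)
Your proof is correct and matches the paper's argument: the difference condition, applied to both orderings, gives \(1\le a,b\le k\), and \(\abs{B}-\abs{A}=b-a\ge0\) gives \(a\le b\). The only cosmetic difference is that the paper derives \(a,b\ge1\) from the antichain property, which is just the fact that \(0\notin[k]\).
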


\begin{poc}
Since \(\cF\) is an antichain, \(a,b\ge1\). Since it is \([k]\)-differencing, \(a,b\le k\). Finally,
\[
\abs{B}-\abs{A}=\abs{B\setminus A}-\abs{A\setminus B}=b-a\ge0,
\]
so \(a\le b\).
\end{poc}

We now construct the polynomials. Let \(\cH_{k}=\operatorname{span}_{\mathbb{R}}\{\boldsymbol{x}_{T}:\abs{T}=k\}\), so \(\dim\cH_{k}=\binom{n}{k}\). The usual polynomial attached to \(A\in\cF\) is
\[
P_{A}(\boldsymbol{x})=\prod_{\ell=1}^{k}\left(\abs{A}-\sum_{i\in A}x_{i}-\ell\right).
\]
Let \(P_{A}^{\mathrm{ml}}\) be the multilinear reduction of \(P_{A}\), obtained by replacing every positive power of \(x_{i}\) by \(x_{i}\). Thus \(P_{A}^{\mathrm{ml}}\) is multilinear and agrees with \(P_{A}\) on \(\{0,1\}^{n}\). If \(B\in\cF\setminus\{A\}\), then the factor indexed by \(\ell=\abs{A\setminus B}\) shows that \(P_{A}(\boldsymbol{1}_{B})=0\), while \(P_{A}(\boldsymbol{1}_{A})=(-1)^{k}k!\). In general, \(P_{A}^{\mathrm{ml}}\) may contain terms of several degrees between \(0\) and \(k\); when \(\abs{A}=k\), it is already homogeneous. The key point is that we can replace it by a polynomial using only degree-\(k\) monomials.

For \(0\le j\le k\), put \(e_{j}(A;\boldsymbol{x})=\sum_{S\subseteq A,\abs{S}=j}\boldsymbol{x}_{S}\).

\begin{claim}\label{clm:PA-expansion}
Let \(A\subseteq[n]\), and let \(r=\abs{A}\ge k\). Then
\[
P_{A}^{\mathrm{ml}}(\boldsymbol{x})=k!\sum_{j=0}^{k}(-1)^{j}\binom{r-j-1}{k-j}e_{j}(A;\boldsymbol{x}).
\]
\end{claim}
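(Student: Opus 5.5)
The plan is to reduce everything to a one-variable identity in the linear form \(X=\sum_{i\in A}x_{i}\). Observe that
\[
P_{A}(\boldsymbol{x})=\prod_{\ell=1}^{k}(r-\ell-X)=(r-1-X)_{k}=k!\binom{r-1-X}{k},
\]
so \(P_{A}=g(X)\) for the univariate polynomial \(g(z)=k!\binom{r-1-z}{k}\) of degree \(k\). Since a multilinear polynomial is uniquely determined by its values on \(\{0,1\}^{n}\), it suffices to show two things: the right-hand side of Claim~\ref{clm:PA-expansion} is multilinear, and it agrees with \(P_{A}\) at every \(\boldsymbol{1}_{B}\), \(B\subseteq[n]\). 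Multilinearity is immediate, because each \(e_{j}(A;\boldsymbol{x})\) is a sum of squarefree monomials.

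For the evaluation, fix \(B\subseteq[n]\) and set \(m=\abs{A\cap B}\). Then \(X(\boldsymbol{1}_{B})=m\), and \(e_{j}(A;\boldsymbol{1}_{B})=\binom{m}{j}\), since this counts the \(j\)-subsets of \(A\) contained in \(B\). So the claim reduces to the identity
\[
\binom{r-1-m}{k}=\sum_{j=0}^{k}(-1)^{j}\binom{m}{j}\binom{r-1-j}{k-j}
\]
for all nonnegative integers \(m\), with binomials at possibly negative upper argument read by the stated convention.

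I will prove this by generating functions. Write
\[
(1+z)^{r-1-m}=(1+z)^{r-1}\left(1-\frac{z}{1+z}\right)^{m}=\sum_{j\ge0}(-1)^{j}\binom{m}{j}z^{j}(1+z)^{r-1-j},
\]
and take \([z^{k}]\) of both sides. The left side gives \(\binom{r-1-m}{k}\). The \(j\)-th term on the right gives \((-1)^{j}\binom{m}{j}\binom{r-1-j}{k-j}\) for \(j\le k\) and contributes nothing for \(j>k\).

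The only delicate point, and the one I expect to need care, is that \(r-1-m\) and \(r-1-j\) may be negative or smaller than \(k\). Then \((1+z)^{\alpha}\) must be read as a formal power series with the generalized binomial coefficients, and the binomial series identities must be checked to hold for arbitrary integer exponents. Alternatively, one can sidestep this by proving the identity as a polynomial identity in \(m\). Both sides are polynomials of degree at most \(k\) in \(m\), and when \(r-1\ge m+k\) all exponents are honest nonnegative integers, so the identity holds there. For fixed \(r\), though, that range of \(m\) is finite, so I would instead also treat \(r\) as a formal variable: both sides are polynomials in \((r,m)\) agreeing on the infinite grid where \(r\) is large, hence identically. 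Multiplying by \(k!\) then gives the claim.
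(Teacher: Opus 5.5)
Your proposal is correct and follows the paper's route. Both proofs use uniqueness of multilinear polynomials on the Boolean cube to reduce to the binomial identity at \(m=\abs{A\cap B}\), and both prove that identity by extracting \([z^{k}]\) from \((1+z)^{r-1}\bigl(1-\tfrac{z}{1+z}\bigr)^{m}\). Your alternative polynomial-identity argument in \((r,m)\) is valid but not needed: since \(m\le r\) and \(j\le k\le r\), the only negative exponent that can occur is \(-1\), and the paper simply reads that case in the formal power series ring.
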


\begin{poc}
Let \(R(\boldsymbol{x})\) denote the right side of the claimed identity. Both \(P_{A}^{\mathrm{ml}}\) and \(R\) are multilinear polynomials involving only the coordinates in \(A\). Hence it is enough to compare them at every \(\boldsymbol{u}\in\{0,1\}^{n}\).

Fix \(\boldsymbol{u}\in\{0,1\}^{n}\), and let \(t=\abs{\{i\in A:u_{i}=1\}}\). Since \(P_{A}^{\mathrm{ml}}\) and \(P_{A}\) agree on Boolean vectors, we have
\[
\frac{P_{A}^{\mathrm{ml}}(\boldsymbol{u})}{k!}=\frac{(r-t-1)(r-t-2)\cdots(r-t-k)}{k!}=\binom{r-t-1}{k}.
\]
Now consider \(R(\boldsymbol{u})\). In the sum defining \(e_{j}(A;\boldsymbol{u})\), the monomial \(\boldsymbol{u}_{S}\) equals one exactly when all elements of \(S\) lie among the \(t\) positions of \(A\) where \(u_{i}=1\). There are \(\binom{t}{j}\) such choices of \(S\), so \(e_{j}(A;\boldsymbol{u})=\binom{t}{j}\). It follows that
\[
\frac{R(\boldsymbol{u})}{k!}=\sum_{j=0}^{k}(-1)^{j}\binom{t}{j}\binom{r-j-1}{k-j}.
\]
To evaluate this sum, read it as the coefficient of \(z^{k}\). Terms with \(j>k\) cannot contribute to this coefficient, so the binomial theorem gives
\[
\sum_{j=0}^{k}(-1)^{j}\binom{t}{j}\binom{r-j-1}{k-j}=[z^{k}](1+z)^{r-1}\sum_{j=0}^{t}\binom{t}{j}\left(\frac{-z}{1+z}\right)^{j}=[z^{k}](1+z)^{r-t-1}=\binom{r-t-1}{k}.
\]
When \(r-t-1=-1\), the coefficient is read in the formal power series expansion, in agreement with our convention for generalized binomial coefficients. Thus \(P_{A}^{\mathrm{ml}}(\boldsymbol{u})=R(\boldsymbol{u})\) for every Boolean vector \(\boldsymbol{u}\), proving the claim.
\end{poc}

For each \(A\subseteq[n]\) with \(r=\abs{A}\ge k\), define the degree-\(k\) polynomial \(H_{A}\in\cH_{k}\) by
\begin{equation}\label{eq:HA-definition}
H_{A}(\boldsymbol{x})=\begin{cases}\boldsymbol{x}_{A},&r=k,\\\displaystyle\sum_{T\in\binom{[n]}{k}}(-1)^{\abs{T\cap A}}\abs{T\cap A}!(r-\abs{T\cap A}-1)!\boldsymbol{x}_{T},&r>k.\end{cases}
\end{equation}
This is the object we will use in place of \(P_{A}\). The main point of the construction is the vanishing across different levels, which will be proved in Claim~\ref{clm:triangularity}. The next proposition shows that \(H_{A}\) is a faithful degree-\(k\) replacement for \(P_{A}\): on the level containing \(A\), the two polynomials differ only by a nonzero constant factor.

\begin{prop}\label{prop:homogeneous}
Let \(A\subseteq[n]\) and \(r=\abs{A}\ge k\). For every \(\boldsymbol{x}\in\Omega_{r}=\{\boldsymbol{x}\in\{0,1\}^{n}:\sum_{i=1}^{n}x_{i}=r\}\), we have \(H_{A}(\boldsymbol{x})=c_{r}P_{A}(\boldsymbol{x})\), where
\[
c_{r}=\begin{cases}\displaystyle\frac{(-1)^{k}}{k!},&r=k,\\\displaystyle\frac{r!}{k!(r-k)},&r>k.\end{cases}
\]
\end{prop}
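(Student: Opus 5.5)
The plan is to verify the identity pointwise on \(\Omega_{r}\), using the fact that both sides depend only on one statistic. Fix \(\boldsymbol{x}=\boldsymbol{1}_{X}\) with \(\abs{X}=r\), and put \(t=\abs{X\cap A}\), so that \(0\le t\le r\). Since \(\sum_{i\in A}x_{i}=t\), the definition gives
\[
P_{A}(\boldsymbol{x})=\prod_{\ell=1}^{k}(r-t-\ell)=k!\binom{r-t-1}{k}.
\]
So I must show that \(H_{A}(\boldsymbol{1}_{X})\) equals \(c_{r}k!\binom{r-t-1}{k}\).

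\emph{Case \(r=k\).} Here \(H_{A}(\boldsymbol{1}_{X})=\boldsymbol{x}_{A}\), which is \(1\) if \(X=A\) (that is, \(t=k\)) and \(0\) otherwise.
\begin{itemize}
\item If \(t<k\), the factor with \(\ell=k-t\in[k]\) vanishes, so \(P_{A}(\boldsymbol{x})=0\).
\item If \(t=k\), then \(P_{A}(\boldsymbol{x})=\prod_{\ell=1}^{k}(-\ell)=(-1)^{k}k!\).
\end{itemize}
Both values agree with \(H_{A}(\boldsymbol{x})=c_{k}P_{A}(\boldsymbol{x})\) for \(c_{k}=(-1)^{k}/k!\).

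\emph{Case \(r>k\).} Only monomials with \(T\subseteq X\) survive, and \(j=\abs{T\cap A}\) ranges over the ways to pick \(j\) elements of \(X\cap A\) and \(k-j\) elements of \(X\setminus A\). This gives
\[
H_{A}(\boldsymbol{1}_{X})=\sum_{j}(-1)^{j}j!(r-j-1)!\binom{t}{j}\binom{r-t}{k-j},
\]
and I must show this equals \(\frac{r!}{r-k}\binom{r-t-1}{k}\). A sanity check at \(t=r\): the left side is \((-1)^{k}k!(r-k-1)!\binom{r}{k}=(-1)^{k}r!/(r-k)\), while the right side uses \(\binom{-1}{k}=(-1)^{k}\), so the two sides agree.

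For general \(t\), I would write the weight through the Beta integral,
\[
j!(r-j-1)!=r!\int_{0}^{1}u^{j}(1-u)^{r-1-j}\,du.
\]
Next I would express \(\sum_{j}\binom{t}{j}\binom{r-t}{k-j}w^{j}\) as \([z^{k}](1+wz)^{t}(1+z)^{r-t}\) with \(w=-u/(1-u)\). The integrand then becomes \([z^{k}](1+z)^{r-t}(1-u)^{r-1-t}(1-u(1+z))^{t}\), and integrating in \(u\) first should leave \(\binom{r-t-1}{k}/(r-k)\) times \(r!\).

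If this bookkeeping gets out of hand, I would use a fallback. Regard both sides as functions of \(t\in\{0,\ldots,r\}\). The right side is a polynomial of degree \(k\) in \(t\) with roots \(t=r-1,\ldots,r-k\). I would then check that the left side vanishes at these same \(t\), via a Vandermonde/finite-difference argument using Fact~\ref{fact:finite-difference}, and also matches at \(t=r\) as computed above.

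The main obstacle is this final hypergeometric summation for \(r>k\), i.e.\ getting the constant \(r!/(k!(r-k))\) exactly. The \(r=k\) case and the reduction to a one-variable identity in \(t\) are routine.
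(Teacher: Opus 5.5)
Your proposal is correct, and it takes a different route from the paper. The paper never evaluates \(H_{A}\) on \(\Omega_{r}\) directly. Instead it introduces the raising operator \(U_{r,k}\), which replaces each \(\boldsymbol{x}_{S}\) by the normalized sum of its degree-\(k\) supersets and so satisfies \(U_{r,k}(f)=f\) on \(\Omega_{r}\). Using Claim~\ref{clm:PA-expansion} and the Beta identity \eqref{eq:beta-sum}, it then matches coefficients to get the polynomial identity \(H_{A}=\frac{r!}{k!(r-k)}U_{r,k}(P_{A}^{\mathrm{ml}})\). You instead work pointwise and reduce everything to the one-variable identity
\[
\sum_{j}(-1)^{j}j!(r-j-1)!\binom{t}{j}\binom{r-t}{k-j}=\frac{r!}{r-k}\binom{r-t-1}{k},\qquad 0\le t\le r.
\]
Both of your suggested completions go through. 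For the Beta route, take \(t<r\), since you already checked \(t=r\). Put \(v=1-u\), so that \(1-u(1+z)=v(1+z)-z\), and integrate termwise. This gives
\[
\frac{1}{r!}H_{A}(\boldsymbol{1}_{X})=\sum_{m}(-1)^{m}\binom{t}{m}\frac{1}{r-m}\binom{r-m}{k-m}=\frac{1}{r-k}\sum_{m}(-1)^{m}\binom{t}{m}\binom{r-m-1}{k-m}=\frac{1}{r-k}\binom{r-t-1}{k},
\]
where the last step is the generating-function computation of Claim~\ref{clm:PA-expansion}. For the interpolation route, you must say explicitly that the left side is a polynomial in \(t\) of degree at most \(k\); this holds because each \(\binom{t}{j}\binom{r-t}{k-j}\) is. The vanishing at \(t=r-a\), \(1\le a\le k\), then follows from \(j!(r-j-1)!\binom{r-a}{j}=(r-a)!(r-j-1)_{a-1}\) and Fact~\ref{fact:finite-difference} with \(b=a\). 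This is exactly the case \(a=b\) of the computation in Claim~\ref{clm:triangularity}. As for what each approach buys: the paper's route gives an identity of polynomials and explains where the definition of \(H_{A}\) comes from, namely as the level-\(r\) homogenization of \(P_{A}^{\mathrm{ml}}\). Your interpolation route dispenses with \(U_{r,k}\) and the Beta integral. It also shows that this proposition and the off-diagonal vanishing in Claim~\ref{clm:triangularity} are one finite-difference fact, with the diagonal value at \(t=r\) as the only extra input.
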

\begin{proof}[Proof of Proposition~\ref{prop:homogeneous}]
The case \(r=k\) is immediate. On \(\Omega_{k}\), the monomial \(\boldsymbol{x}_{A}\) is one at \(\boldsymbol{1}_{A}\) and zero at every other Boolean vector, and a direct evaluation gives \(\boldsymbol{x}_{A}=\frac{(-1)^{k}}{k!}P_{A}(\boldsymbol{x})\).

Assume now that \(r>k\). We first introduce a normalized degree-\(k\) lifting operation adapted to the \(r\)-th level. For a monomial \(\boldsymbol{x}_{S}\) with \(|S|\le k\), we define
\begin{equation}\label{eq:raising}
U_{r,k}(\boldsymbol{x}_{S})=\frac{1}{\binom{r-\abs{S}}{k-\abs{S}}}\sum_{\substack{T\supseteq S\\\abs{T}=k}}\boldsymbol{x}_{T}.
\end{equation}
If \(f=\sum_{\abs{S}\le k}c_{S}\boldsymbol{x}_{S}\), define \(U_{r,k}(f)\) by making this replacement in every monomial:
\[
U_{r,k}(f)=\sum_{\abs{S}\le k}c_{S}U_{r,k}(\boldsymbol{x}_{S}).
\]
Now take \(B\in\binom{[n]}{r}\). The two functions \(U_{r,k}(\boldsymbol{x}_{S})\) and \(\boldsymbol{x}_{S}\) have the same value at \(\boldsymbol{1}_{B}\). If \(S\not\subseteq B\), both values are zero. If \(S\subseteq B\), exactly \(\binom{r-\abs{S}}{k-\abs{S}}\) summands in \eqref{eq:raising} are equal to one, so the normalized sum is one. Making this replacement term by term gives \(U_{r,k}(f)=f\) on \(\Omega_{r}\) for every multilinear polynomial \(f\) of degree at most \(k\).

We now apply this to \(f=P_{A}^{\mathrm{ml}}\). To compare \(U_{r,k}(P_{A}^{\mathrm{ml}})\) with \(H_{A}\), fix \(T\in\binom{[n]}{k}\), and set \(q=\abs{T\cap A}\). A monomial \(\boldsymbol{x}_{S}\) from \(e_{j}(A;\boldsymbol{x})\) contributes to the coefficient of \(\boldsymbol{x}_{T}\) only when \(S\subseteq T\cap A\). There are \(\binom{q}{j}\) such sets \(S\), so \eqref{eq:raising} and Claim~\ref{clm:PA-expansion} show that the coefficient of \(\boldsymbol{x}_{T}\) in \(U_{r,k}(P_{A}^{\mathrm{ml}})\) is
\[
k!\sum_{j=0}^{q}(-1)^{j}\binom{q}{j}\frac{\binom{r-j-1}{k-j}}{\binom{r-j}{k-j}}=k!(r-k)\sum_{j=0}^{q}(-1)^{j}\binom{q}{j}\frac{1}{r-j},
\]
where we used \(\frac{\binom{r-j-1}{k-j}}{\binom{r-j}{k-j}}=\frac{r-k}{r-j}\).
We use the elementary identity
\begin{equation}\label{eq:beta-sum}
\sum_{j=0}^{q}(-1)^{j}\binom{q}{j}\frac{1}{r-j}=(-1)^{q}\frac{q!(r-q-1)!}{r!}.
\end{equation}
Indeed, expanding \((1-y)^{q}\) and then replacing the index \(h\) by \(q-j\) gives
\[
\int_{0}^{1}y^{r-q-1}(1-y)^{q}\mathrm{d}y=\sum_{h=0}^{q}(-1)^{h}\binom{q}{h}\frac{1}{r-q+h}=(-1)^{q}\sum_{j=0}^{q}(-1)^{j}\binom{q}{j}\frac{1}{r-j}.
\]
The integral equals \(\frac{q!(r-q-1)!}{r!}\), which proves \eqref{eq:beta-sum}. Therefore the coefficient of \(\boldsymbol{x}_{T}\) in \(U_{r,k}(P_{A}^{\mathrm{ml}})\) is \(k!(r-k)(-1)^{q}\frac{q!(r-q-1)!}{r!}\). This is \(\frac{k!(r-k)}{r!}\) times the coefficient of \(\boldsymbol{x}_{T}\) in \eqref{eq:HA-definition}. Hence
\[
H_{A}=\frac{r!}{k!(r-k)}U_{r,k}(P_{A}^{\mathrm{ml}}).
\]
Since \(U_{r,k}(P_{A}^{\mathrm{ml}})=P_{A}^{\mathrm{ml}}=P_{A}\) on \(\Omega_{r}\), the proposition follows.
\end{proof}
The most important point is that these polynomials still vanish in the order we need, even when they are evaluated on sets of different sizes.
\begin{claim}\label{clm:triangularity}
Let \(A,B\in\cF\) and \(\abs{B}\ge\abs{A}\). Then
\[
H_{A}(\boldsymbol{1}_{B})=\begin{cases}0,&A\ne B,\\1,&A=B\text{ and }\abs{A}=k,\\\displaystyle(-1)^{k}\frac{\abs{A}!}{\abs{A}-k},&A=B\text{ and }\abs{A}>k.\end{cases}
\]
\end{claim}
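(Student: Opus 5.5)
The plan is to evaluate \(H_{A}(\boldsymbol{1}_{B})\) directly from \eqref{eq:HA-definition}, splitting into the cases \(\abs{A}=k\) and \(\abs{A}>k\). For the nonvanishing case \(A\ne B\) with \(\abs{A}>k\), I will reduce the sum to a finite difference and invoke Fact~\ref{fact:finite-difference}.

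\emph{Case \(\abs{A}=k\).} Here \(H_{A}=\boldsymbol{x}_{A}\), so \(H_{A}(\boldsymbol{1}_{B})=1\) if \(A\subseteq B\) and \(0\) otherwise. If \(A\ne B\) and \(\abs{B}\ge\abs{A}\), then \(A\subseteq B\) would force \(A\subsetneq B\). This is impossible because \(\cF\) is an antichain. So the value is \(0\) when \(A\ne B\) and \(1\) when \(A=B\).

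\emph{Case \(r=\abs{A}>k\).} Only monomials \(\boldsymbol{x}_{T}\) with \(T\subseteq B\) survive at \(\boldsymbol{1}_{B}\). Put \(a=\abs{A\setminus B}\), \(b=\abs{B\setminus A}\) and \(m=\abs{A\cap B}=r-a\). The number of \(k\)-sets \(T\subseteq B\) with \(\abs{T\cap A}=q\) is \(\binom{m}{q}\binom{b}{k-q}\). Hence
\[
H_{A}(\boldsymbol{1}_{B})=\sum_{q}(-1)^{q}\binom{m}{q}\binom{b}{k-q}q!(r-q-1)!.
\]
If \(B=A\), then \(b=0\), \(m=r\), and only \(q=k\) survives. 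This gives \((-1)^{k}\binom{r}{k}k!(r-k-1)!=(-1)^{k}\frac{r!}{r-k}\), as claimed.

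If \(B\ne A\), Claim~\ref{clm:basic-differences} (using \(\abs{B}\ge\abs{A}\)) gives \(1\le a\le b\le k\). Substitute \(t=k-q\), which ranges over \(0\le t\le b\); this is legitimate because \(b\le k\) guarantees \(q\ge0\) throughout. Using \(\binom{m}{k-t}(k-t)!=m!/(m-k+t)!\), the sum becomes
\[
(-1)^{k}\sum_{t=0}^{b}(-1)^{t}\binom{b}{t}f(t),\qquad f(t)=m!\prod_{i=m-k+t+1}^{r-k+t-1}i.
\]
The product has exactly \(r-m-1=a-1\) factors, so \(f\) is a polynomial in \(t\) of degree \(a-1<b\). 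Fact~\ref{fact:finite-difference} then gives \(H_{A}(\boldsymbol{1}_{B})=0\).

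The main obstacle is checking that \(f(t)\) really agrees with each original term for every \(t\in\{0,\ldots,b\}\), including the degenerate indices. When \(m-k+t<0\) the original term vanishes because \(\binom{m}{k-t}=0\), while the factorial expression for \(f\) is undefined there. I handle this by checking that the product form of \(f(t)\) also vanishes at such \(t\). Its factors run from \(m-k+t+1\le0\) up to \(r-k+t-1\ge0\), the latter holding since \(r>k\). Hence the factor \(0\) appears, and the polynomial identity holds on the whole range. The hypotheses \(a\le b\) (which is where \(\abs{B}\ge\abs{A}\) is used) and \(b\le k\) (which is where the \([k]\)-differencing property is used) are exactly what make this argument work.
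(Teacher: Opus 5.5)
Your proof is correct and follows the paper's argument essentially line by line. You use the same evaluation grouped by \(q=\abs{T\cap A}\) and rewrite each term as \((r-a)!\) times a falling factorial with \(a-1\) factors, including the same check that this product vanishes exactly when \(\binom{r-a}{q}=0\). You then invoke Fact~\ref{fact:finite-difference} using \(a-1<b\), as the paper does. The only difference is cosmetic: you substitute \(t=k-q\) where the paper uses \(q=k-b+t\), which just reverses the summation index via the symmetry of \(\binom{b}{t}\).
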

\begin{poc}
Put \(r=\abs{A}\). If \(r=k\), then \(H_{A}=\boldsymbol{x}_{A}\). Since \(\cF\) is an antichain and \(\abs{B}\ge\abs{A}\), the value is zero off the diagonal and one on the diagonal.

Assume \(r>k\) and \(A\ne B\). Let \(a=\abs{A\setminus B}\) and \(b=\abs{B\setminus A}\). By Claim~\ref{clm:basic-differences}, \(1\le a\le b\le k\). Evaluating \eqref{eq:HA-definition} at \(\boldsymbol{1}_{B}\) and grouping surviving monomials by \(q=\abs{T\cap A}\) gives
\begin{equation}\label{eq:eval-q}
H_{A}(\boldsymbol{1}_{B})=\sum_{q=0}^{k}(-1)^{q}q!(r-q-1)!\binom{r-a}{q}\binom{b}{k-q}.
\end{equation}
For \(q\le k<r\) we have
\[
q!(r-q-1)!\binom{r-a}{q}=(r-a)!(r-q-1)_{a-1},
\]
when \(q\le r-a\). If \(q>r-a\), the binomial coefficient on the left is zero, while \(r-q-1<a-1\), so the falling factorial on the right contains a zero factor. Thus the same identity holds in this case as well. Since \(b\le k\), substituting \(q=k-b+t\) in \eqref{eq:eval-q} yields
\[
H_{A}(\boldsymbol{1}_{B})=(-1)^{k-b}(r-a)!\sum_{t=0}^{b}(-1)^{t}\binom{b}{t}(r-k+b-t-1)_{a-1}.
\]
The last falling factorial is a polynomial in \(t\) of degree \(a-1\). Since \(a\le b\) and in fact \(a-1<b\), Fact~\ref{fact:finite-difference} applies and gives zero.

Finally, at \(\boldsymbol{1}_{A}\) only monomials \(\boldsymbol{x}_{T}\) with \(T\in\binom{A}{k}\) contribute. Therefore
\[
H_{A}(\boldsymbol{1}_{A})=(-1)^{k}\binom{r}{k}k!(r-k-1)!=\frac{(-1)^{k}r!}{r-k}.
\]
This proves the claim.
\end{poc}
List \(\cF=\{A_{1},\ldots,A_{m}\}\) with \(\abs{A_{1}}\le\cdots\le\abs{A_{m}}\). By Claim~\ref{clm:triangularity}, the matrix \(\bigl(H_{A_{i}}(\boldsymbol{1}_{A_{j}})\bigr)_{1\le i,j\le m}\) is triangular with nonzero diagonal. By Proposition~\ref{prop:triangular-criterion}, \(H_{A_{1}},\ldots,H_{A_{m}}\) are linearly independent in \(\cH_{k}\). Hence
\[
\abs{\cF}=m\le\dim\cH_{k}=\binom{n}{k}.
\]
This proves the upper bound in Theorem~\ref{thm:main} when \(n>2k\).

\subsection{Extremal configurations}
The two full levels \(\binom{[n]}{k}\) and \(\binom{[n]}{n-k}\) attain equality. For the second family, if \(A\ne B\) are \((n-k)\)-sets, then \(A\setminus B\subseteq B^{c}\), so \(1\le\abs{A\setminus B}\le k\). It remains to show that these are the only extremal families. The equality proof has three steps. First, equality in the polynomial bound forces a balanced-shadow identity for every initial part of the family inside the middle band \(k\le\abs{A}\le n-k\). Second, Katona's equality case for intersecting shadows turns this identity into a rigidity statement: a middle-band equality case is one of the two full levels. Finally, we show that the lifting reductions used to reach the middle band must have been trivial.

For a family \(\cA\subseteq2^{[n]}\), write
\[
\partial_{k}\cA=\left\{T\in\binom{[n]}{k}:T\subseteq A\textup{ for some }A\in\cA\right\}
\]
for its \(k\)-shadow. We shall use the following form of Katona's intersecting shadow theorem~\cite{Katona64}; the strict equality statement is recorded explicitly in~\cite[Theorem~1.3]{FranklKatona21}.

\begin{lemma}[Katona's intersecting shadow theorem]\label{lem:katona-shadow}
Let \(m>k\), let \(n\ge m+k\), and let \(\varnothing\ne\cA\subseteq\binom{[n]}{m}\) satisfy \(\abs{A\cap A'}\ge m-k\) for all \(A,A'\in\cA\). Then \(\abs{\partial_{k}\cA}\ge\abs{\cA}\). Equality holds if and only if there is a set \(Q\subseteq[n]\) of size \(m+k\) such that \(\cA=\binom{Q}{m}\).
\end{lemma}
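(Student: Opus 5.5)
Since Lemma~\ref{lem:katona-shadow} is classical and the paper only cites it, I would reprove it by the standard shifting argument, inducting on \(n\) and on \(k\). \textbf{Step 1} reduces to shifted families. For \(i<j\), the shift \(S_{ij}\) replaces \(j\) by \(i\) wherever this produces a new set. It preserves \(\abs{\cA}\), and the usual exchange argument shows it preserves the condition \(\abs{A\cup A'}\le m+k\), which is equivalent to \(\abs{A\cap A'}\ge m-k\). It also satisfies \(\partial_{k}S_{ij}(\cA)\subseteq S_{ij}(\partial_{k}\cA)\), so \(\abs{\partial_k S_{ij}(\cA)}\le\abs{\partial_k\cA}\). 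Hence for the inequality we may assume \(\cA\) is shifted.

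\textbf{Step 2} settles the base cases. If \(k=0\), the shadow consists of the empty set only, and the condition forces \(\abs{\cA}=1\). If \(n=m+k\), the inclusion graph between \(\binom{[m+k]}{m}\) and \(\binom{[m+k]}{k}\) is biregular, of degree \(\binom{m}{k}\) on both sides. Both sides also have the same size, since \(\binom{m+k}{m}=\binom{m+k}{k}\). Double counting then gives \(\abs{\partial_k\cA}\ge\abs{\cA}\). Because this graph is connected, equality holds only for \(\cA=\binom{[m+k]}{m}\).

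\textbf{Step 3} is the induction step for \(n>m+k\). Split a shifted \(\cA\) into \(\cA_0=\{A:n\notin A\}\) and \(\cA_1=\{A\setminus\{n\}:n\in A\}\). Then
\[
\partial_k\cA\supseteq\partial_k\cA_0\ \sqcup\ \{T\cup\{n\}:T\in\partial_{k-1}\cA_1\}.
\]
Here \(\cA_0\subseteq\binom{[n-1]}{m}\) inherits the hypothesis, so induction on \(n\) applies to it. For \(\cA_1\) I use shiftedness. Suppose \(A,A'\ni n\) with \(\abs{A\cup A'}=m+k\). Since \(n>m+k\), some element \(x<n\) lies outside \(A\cup A'\). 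Shifting \(n\to x\) in \(A'\) keeps the set in \(\cA\) but gives union of size \(m+k+1\) with \(A\), a contradiction. Hence any two sets of \(\cA_1\) satisfy \(\abs{B\cup B'}\le (m-1)+(k-1)\). This is the hypothesis with parameters \((m-1,k-1)\), and still \(m-1>k-1\) and \(n-1\ge (m-1)+(k-1)\). Induction on \(k\) gives \(\abs{\partial_{k-1}\cA_1}\ge\abs{\cA_1}\), and adding the two bounds proves the inequality.

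\textbf{Main obstacle: the equality case.} Shifting can destroy the structure \(\cA=\binom{Q}{m}\), so equality cannot simply be read off from the shifted family. I would first run the induction above for shifted families. There, equality forces equality in both pieces and forces the disjoint union to be the whole shadow. By induction, \(\cA_0\) and \(\cA_1\) are then either empty or full levels of sets \(Q_0\), \(Q_1\), and compatibility of the two shadows forces \(\cA=\binom{Q}{m}\). To transfer this to a general \(\cA\), I would use that a shift maps \(\cA\) to \(\binom{Q'}{m}\) only if \(\cA\) was already of the form \(\binom{Q}{m}\) with \(Q'=S_{ij}(Q)\). To prove this, I would use the observation that equality forces every \(k\)-set of the shadow to lie in exactly as many members of \(\cA\) as a full level dictates. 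If this local argument proves too delicate, I would instead defer to the equality statement in~\cite[Theorem~1.3]{FranklKatona21}, as the paper does.
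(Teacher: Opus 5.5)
Your Steps 1--3 correctly prove the inequality; this is the standard shifting proof of Katona's theorem. That is more than the paper gives, since it proves nothing here and only cites Katona and Frankl--Katona. The gap is in the equality case.

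First, a minor point. In the shifted case it is not compatibility of the shadows that excludes sets containing $n$. Writing $12$ for $\{1,2\}$, take $m=2$, $k=1$, $n=4$. The shifted family $\{12,13,23,14\}$ has $\abs{\partial_1\cA}=\abs{\cA}$, full levels $\cA_0=\binom{[3]}{2}$ and $\cA_1=\binom{\{1\}}{1}$, and an exact shadow decomposition. Only the hypothesis rules it out, since $\{2,3\}\cap\{1,4\}=\varnothing$. The step you need is the union bound. Shiftedness gives $\cA_0\neq\varnothing$, so $\cA_0=\binom{Q_0}{m}$. For $B\in\cA_1$ we have $\abs{Q_0\setminus B}\ge k+1$ and $m\ge k+1$, so some $A\in\binom{Q_0}{m}$ has $\abs{A\setminus B}\ge k+1$. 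Then $\abs{A\cup B\cup\{n\}}\ge m+k+1$, a contradiction, hence $\cA_1=\varnothing$.

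The real gap is the transfer to unshifted families. Your principle that a shift maps $\cA$ onto $\binom{Q'}{m}$ only if $\cA$ is already a full level is false for families that merely satisfy the hypothesis. For $m=3$, $k=2$, the intersecting family $\{234,235,245,345,124,125,134,135,236,456\}$ is mapped by $S_{16}$ onto $\binom{[5]}{3}$. So equality must be used, and your proposed use of it does not go through. Double counting only says that the average degree of a $k$-set of $\partial_k\cA$ is $\binom{m}{k}$. Individual degrees are not a priori at most $\binom{m}{k}$: for $m=k+1$, the intersecting family $\{T\cup\{x\}:x\notin T\}$ gives $T$ degree $n-k$. So no per-set statement follows.

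Here is a route that works. Shifts do not enlarge $\partial_k$ and the inequality is already proved, so $\abs{\cA}=\abs{S_{ij}(\cA)}\le\abs{\partial_kS_{ij}(\cA)}\le\abs{\partial_k\cA}=\abs{\cA}$. Thus every family along the shifting sequence is again an equality case, and it suffices to undo one shift with $S_{ij}(\cA)=\binom{Q'}{m}$.
\begin{itemize}
\item If $i\notin Q'$ or $j\in Q'$, then $\cA=\binom{Q'}{m}$ directly.
\item Otherwise put $X=Q'\setminus\{i\}$. One checks that $\cA=\binom{X}{m}\cup\{C\cup\{i\}:C\in\mathcal{C}'\}\cup\{C\cup\{j\}:C\in\mathcal{C}\}$ for a partition $\binom{X}{m-1}=\mathcal{C}\sqcup\mathcal{C}'$. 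A direct count gives $\abs{\partial_k\cA}-\abs{\cA}=\abs{\partial_{k-1}\mathcal{C}\cap\partial_{k-1}\mathcal{C}'}$.
\end{itemize}
Suppose both parts are nonempty. Connectivity of the Johnson graph on $\binom{X}{m-1}$ gives $C\in\mathcal{C}$ and $C'\in\mathcal{C}'$ with $\abs{C\cap C'}=m-2\ge k-1$, so this intersection is nonempty. Hence $\cA=\binom{Q'}{m}$ or $\cA=\binom{X\cup\{j\}}{m}$. Without an argument of this kind, the equality half rests entirely on the citation you mention as a fallback, exactly as in the paper.
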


\begin{prop}\label{prop:middle-band-equality}
Assume \(n>2k\). Let \(\cG\subseteq2^{[n]}\) be \([k]\)-differencing with \(\abs{\cG}=\binom{n}{k}\), and suppose that \(k\le\abs{A}\le n-k\) for every \(A\in\cG\). Then \(\cG=\binom{[n]}{k}\) or \(\cG=\binom{[n]}{n-k}\).
\end{prop}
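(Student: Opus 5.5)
The plan is to extract, from the equality \(\abs{\cG}=\binom{n}{k}\), that the polynomials \(H_{A}\) (\(A\in\cG\)) form a \emph{basis} of \(\cH_{k}\). Then I peel off the levels of \(\cG\) from the top using evaluation functionals, and apply Lemma~\ref{lem:katona-shadow} to the top level.

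\textbf{Setup.} Order \(\cG=\{A_{1},\ldots,A_{m}\}\) by nondecreasing size as before. Let \(M\) be the largest size occurring, and write \(\cG_{M}=\cG\cap\binom{[n]}{M}\) and \(\cG_{<M}=\cG\setminus\cG_{M}\). If \(M=k\), then \(\cG\subseteq\binom{[n]}{k}\) and equality forces \(\cG=\binom{[n]}{k}\). So assume \(k<M\le n-k\). Then the members of \(\cG_{M}\) satisfy \(\abs{B\cap B'}=M-\abs{B\setminus B'}\ge M-k\), and \(n\ge M+k\), so Lemma~\ref{lem:katona-shadow} applies to \(\cG_{M}\) and gives \(\abs{\partial_{k}\cG_{M}}\ge\abs{\cG_{M}}\).

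\textbf{Reverse inequality.} For \(B\in\cG_{M}\), the evaluation functional \(\mathrm{ev}_{B}:f\mapsto f(\boldsymbol{1}_{B})\) on \(\cH_{k}\) has the following properties:
\begin{itemize}
\item it depends only on the coefficients of \(\boldsymbol{x}_{T}\) with \(T\in\partial_{k}\cG_{M}\);
\item by Claim~\ref{clm:triangularity}, it kills \(H_{A}\) for every \(A\in\cG_{<M}\), since \(\abs{B}\ge\abs{A}\);
\item again by Claim~\ref{clm:triangularity}, the matrix \(\bigl(\mathrm{ev}_{B}(H_{B'})\bigr)_{B,B'\in\cG_{M}}\) is diagonal and nonsingular.
\end{itemize}
Since the \(H_{A}\) form a basis, the functionals \(\mathrm{ev}_{B}\) (\(B\in\cG_{M}\)) span exactly the annihilator of \(W=\operatorname{span}\{H_{A}:A\in\cG_{<M}\}\), which has dimension \(\binom{n}{k}-\abs{\cG_{<M}}=\abs{\cG_{M}}\). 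On the other hand, each \(\mathrm{ev}_{B}\) lies in the \(\abs{\partial_{k}\cG_{M}}\)-dimensional space of functionals supported on \(\partial_{k}\cG_{M}\).

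The balanced-shadow identity I aim for is \(\abs{\partial_{k}\cG_{M}}=\abs{\cG_{M}}\). More generally, running the same argument on each initial segment \(\cG_{\le m}\) would give the analogous identity level by level. To get \(\le\), I would show that every functional supported on \(\partial_{k}\cG_{M}\) annihilates \(W\). Equivalently, each \(H_{A}\) with \(A\in\cG_{<M}\) has zero coefficient on every \(T\in\partial_{k}\cG_{M}\) after a suitable change of basis.

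\textbf{Main obstacle.} I expect this step to be the hardest, because \(H_{A}\) for \(\abs{A}>k\) has full support. What I would try first is a dimension count in the opposite direction. Consider the family \(\cG'=\cG_{<M}\cup\{\text{a suitable subfamily of }\binom{[n]}{k}\}\), or replace \(\cG_{M}\) by its \(k\)-shadow, and check directly from Claim~\ref{clm:basic-differences} whether the triangularity pattern of Claim~\ref{clm:triangularity} survives. If it does, then \(\abs{\cG_{<M}}+\abs{\partial_{k}\cG_{M}}\le\binom{n}{k}=\abs{\cG_{<M}}+\abs{\cG_{M}}\), which gives \(\le\).

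\textbf{Conclusion.} Once \(\abs{\partial_{k}\cG_{M}}=\abs{\cG_{M}}\) is established, the equality case of Lemma~\ref{lem:katona-shadow} gives \(\cG_{M}=\binom{Q}{M}\) for some \(Q\) with \(\abs{Q}=M+k\). If \(M=n-k\), then \(Q=[n]\) and \(\abs{\cG_{M}}=\binom{n}{k}\) already, so \(\cG=\binom{[n]}{n-k}\). If \(k<M<n-k\), I would derive a contradiction in three steps.
\begin{enumerate}
\item Take any \(A\in\cG_{<M}\). Comparing \(A\) with all \(M\)-subsets of \(Q\), the condition \(\abs{B\setminus A}\le k\) for every \(B\in\binom{Q}{M}\) forces \(\abs{Q\setminus A}\le k\). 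Together with the antichain property, this confines \(A\) tightly relative to \(Q\).
\item A counting argument then shows \(\abs{\cG}<\binom{n}{k}\), using \(\binom{M+k}{k}<\binom{n}{k}\) and the restricted room for lower sets.
\item If \(\cG_{<M}=\varnothing\), the contradiction is immediate, since then \(\abs{\cG}=\binom{M+k}{k}<\binom{n}{k}\).
\end{enumerate}
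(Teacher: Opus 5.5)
Your proposal has a genuine gap at the step you flag as the main obstacle. The reverse inequality $\abs{\partial_k\cG_M}\le\abs{\cG_M}$ for the top level is never proved, and neither suggested route can supply it. Your own dimension count shows why. Put $S=\partial_k\cG_M$. The functionals $\mathrm{ev}_B$ ($B\in\cG_M$) span $\operatorname{Ann}(W)$ and are supported on $S$. Hence $\abs{S}\le\abs{\cG_M}$ is equivalent to $W\subseteq\operatorname{span}\{\boldsymbol{x}_T:T\notin S\}$. This is a condition on each $H_A$ with $A\in\cG_{<M}$ individually, so no change of basis can help.

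For $\abs{A}=r>k$, every coefficient $(-1)^q q!(r-q-1)!$ of $H_A$ is nonzero. So in your framework the top-level identity holds if and only if $\cG_{<M}\subseteq\binom{[n]}{k}$. That is, it already contains the statement that no level strictly between $k$ and $M$ occurs, which is most of the proposition.

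Your other suggestion fails for the same reason. Linear independence of $\{H_A:A\in\cG_{<M}\}\cup\{\boldsymbol{x}_T:T\in S\}$ means $W\cap\operatorname{span}\{\boldsymbol{x}_T:T\in S\}=0$. The same count shows this intersection has dimension $\abs{S}-\abs{\cG_M}$, so that check restates the goal rather than proving it. Peeling from the top is the wrong direction, because $\partial_k\cG_M$ overlaps the shadows of lower levels.

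The idea you are missing is the complementary system, combined with working on initial segments from the bottom. Since $CW$ is triangular and nonsingular and $C$ is square, the inclusion matrix $W$ between $\binom{[n]}{k}$ and $\cG$ is invertible. This gives a bijection $\alpha$ with $T\subseteq\alpha(T)$. Running the same argument on $\cG^{c}$, which is still $[k]$-differencing and in the middle band, gives a bijection $\beta$ with $T\cap\beta(T)=\varnothing$.

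The difference bound then forces $\alpha(T)\setminus\beta(T)=T$ and $\abs{\alpha(T)}\ge\abs{\beta(T)}$. A cycle argument on $\beta\circ\alpha^{-1}$ upgrades this to equality. This yields $\partial_k\cG_{\le s}=\{T:\abs{\alpha(T)}\le s\}$, hence $\abs{\partial_k\cG_{\le s}}=\abs{\cG_{\le s}}$ for initial segments.

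The paper applies this identity at the lowest level $m$, where $\cG_{\le m}$ is a single level. If $m=k$, it applies it at the next level $r$, after cancelling the $k$-sets of $\cG$; these are disjoint from $\partial_k\cG_r$ by the antichain property. Katona's equality case then gives $\binom{Q}{m}$ or $\binom{Q}{r}$.

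Your closing steps are fine. In fact your Step 1 already gives $\abs{A}\ge\abs{A\cap Q}\ge M$ for every $A\in\cG_{<M}$, so $\cG_{<M}=\varnothing$ directly and no counting is needed. But these steps rest on the unproved identity.
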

\begin{proof}[Proof of Proposition~\ref{prop:middle-band-equality}]
Put \(d=\binom{n}{k}\) and \(\cK=\binom{[n]}{k}\). We first obtain two bijections between \(\cK\) and \(\cG\). List \(\cG=\{A_{1},\ldots,A_{d}\}\) so that \(\abs{A_{1}}\le\cdots\le\abs{A_{d}}\), and list \(\cK=\{T_{1},\ldots,T_{d}\}\) in any order. By \eqref{eq:HA-definition}, write
\[
H_{A_{i}}(\boldsymbol{x})=\sum_{j=1}^{d}c_{i,j}\boldsymbol{x}_{T_{j}}.
\]
Define the \(d\times d\) matrices \(C=(c_{i,j})\) and \(W=(w_{j,\ell})\), where \(w_{j,\ell}=1\) if \(T_{j}\subseteq A_{\ell}\), and \(w_{j,\ell}=0\) otherwise. Then
\[
(CW)_{i,\ell}=H_{A_{i}}(\boldsymbol{1}_{A_{\ell}}).
\]
The rows and columns of \(CW\) are therefore indexed by the same ordered list \(A_{1},\ldots,A_{d}\). Claim~\ref{clm:triangularity} shows that \(CW\) is triangular with nonzero diagonal. Hence \(CW\) is invertible, and since \(C\) and \(W\) are square matrices, \(W\) is invertible.

Expanding \(\det W\), at least one product in the determinant expansion is nonzero. All entries of \(W\) are \(0\) or \(1\), so this product chooses one nonzero entry from each row and each column. Therefore there is a bijection \(\alpha:\cK\to\cG\) such that
\(
T\subseteq\alpha(T)
\)
for every \(T\in\cK\).

We repeat the same argument for \(\cG^{c}=\{B^{c}:B\in\cG\}\). This family is still \([k]\)-differencing, since \(\abs{A^{c}\setminus B^{c}}=\abs{B\setminus A}\), and it is still in the same middle band. Translating \(T\subseteq B^{c}\) back to \(B\), we get an invertible matrix
\[
D=(d_{T,B})_{T\in\cK,B\in\cG},
\]
where \(d_{T,B}=1\) if \(T\cap B=\varnothing\), and \(d_{T,B}=0\) otherwise. Hence there is a bijection \(\beta:\cK\to\cG\) such that
\(
T\cap\beta(T)=\varnothing
\)
for every \(T\in\cK\).

Fix \(T\in\cK\). Since \(T\subseteq\alpha(T)\) and \(T\cap\beta(T)=\varnothing\), we have \(T\subseteq\alpha(T)\setminus\beta(T)\). Also \(\alpha(T)\ne\beta(T)\), because \(k\ge1\). The difference bound gives \(\abs{\alpha(T)\setminus\beta(T)}\le k\). This set already contains the \(k\)-set \(T\), so
\(
\alpha(T)\setminus\beta(T)=T.
\)
Using the difference bound in the other direction,
\[
\abs{\alpha(T)}-\abs{\beta(T)}=\abs{\alpha(T)\setminus\beta(T)}-\abs{\beta(T)\setminus\alpha(T)}\ge k-k=0.
\]
Thus \(\abs{\alpha(T)}\ge\abs{\beta(T)}\). Let \(\pi=\beta\circ\alpha^{-1}\), which is a permutation of \(\cG\). The last inequality says that \(\abs{A}\ge\abs{\pi(A)}\) for every \(A\in\cG\). Summing these inequalities around any cycle of \(\pi\) gives equality of the two sums, so every inequality on the cycle is equality. Hence
\(
\abs{\alpha(T)}=\abs{\beta(T)}
\)
for every \(T\in\cK\). Denote this common value by \(\rho(T)\).
\begin{claim}\label{clm:balanced-shadow}
For every integer \(s\),
\(
\partial_{k}\cG_{\le s}=\{T\in\cK:\rho(T)\le s\},
\)
where \(\cG_{\le s}=\{A\in\cG:\abs{A}\le s\}\).
\end{claim}
\begin{poc}
If \(\rho(T)\le s\), then \(T\subseteq\alpha(T)\) and \(\abs{\alpha(T)}=\rho(T)\le s\). Hence \(T\in\partial_{k}\cG_{\le s}\).

Conversely, suppose \(T\subseteq A\) for some \(A\in\cG_{\le s}\), and put \(B=\beta(T)\). Then \(T\subseteq A\setminus B\). The sets \(A\) and \(B\) are distinct, since \(T\subseteq A\) but \(T\cap B=\varnothing\). The difference bound gives \(\abs{A\setminus B}\le k=\abs{T}\), and therefore \(A\setminus B=T\). Also \(\abs{B\setminus A}\le k\), so
\[
\abs{B}=\abs{A\cap B}+\abs{B\setminus A}=\abs{A}-\abs{A\setminus B}+\abs{B\setminus A}\le\abs{A}-k+k\le s.
\]
Thus \(\rho(T)=\abs{\beta(T)}=\abs{B}\le s\).
\end{poc}
Since \(\alpha\) is a bijection and \(\abs{\alpha(T)}=\rho(T)\), Claim~\ref{clm:balanced-shadow} gives
\(
\abs{\partial_{k}\cG_{\le s}}=\abs{\cG_{\le s}}
\)
for every \(s\). We now use this identity at the lowest levels of \(\cG\).

Let \(m=\min\{\abs{A}:A\in\cG\}\). Suppose first that \(m>k\), and put \(\cA=\cG\cap\binom{[n]}{m}\). Since \(m\) is the lowest level, \(\cG_{\le m}=\cA\), and the identity above gives \(\abs{\partial_{k}\cA}=\abs{\cA}\). For \(A,A'\in\cA\), the difference bound gives \(\abs{A\setminus A'}\le k\), hence \(\abs{A\cap A'}\ge m-k\). Also \(m\le n-k\), so \(n\ge m+k\). Lemma~\ref{lem:katona-shadow} applies, and we get a set \(Q\subseteq[n]\) of size \(m+k\) such that \(\cA=\binom{Q}{m}\).

We now show that no higher level can occur.
\begin{claim}\label{claim:large}
If \(m>k\), then \(\cG\) has no member of size larger than \(m\). In particular, \(\cG=\binom{Q}{m}\).
\end{claim}

\begin{poc}
Suppose for a contradiction that \(X\in\cG\) has size \(s>m\), and put \(q=\abs{X\cap Q}\). We compare \(X\) with suitable members of \(\binom{Q}{m}=\cA\).

First suppose \(q\le k\). Then \(\abs{Q\setminus X}=m+k-q\ge m\), so we can choose \(A\in\binom{Q}{m}\) disjoint from \(X\). This gives \(\abs{X\setminus A}=s>k\), contradicting the difference bound.

It remains to consider \(q>k\). Choose \(A\in\binom{Q}{m}\) by taking all \(m+k-q\) points of \(Q\setminus X\) and then \(q-k\) points of \(Q\cap X\). This choice is possible because \((m+k-q)+(q-k)=m\). It leaves exactly \(k\) points of \(Q\cap X\) outside \(A\), and all \(s-q\) points of \(X\setminus Q\) are also outside \(A\). Hence \(\abs{X\setminus A}=s-q+k\). The difference bound gives \(s-q+k\le k\), so \(s\le q\). Since \(q=\abs{X\cap Q}\le\abs{X}=s\), we get \(q=s\), and hence \(X\subseteq Q\). But \(s>m\), so \(X\) contains an \(m\)-subset \(A'\) belonging to \(\binom{Q}{m}=\cA\), contradicting the antichain property. Therefore no such \(X\) exists, and \(\cG=\cA=\binom{Q}{m}\).
\end{poc}
By Claim~\ref{claim:large}, \(\abs{\cG}=\binom{m+k}{k}\). Since also \(\abs{\cG}=\binom{n}{k}\), while \(m+k\le n\), the strict increase of \(u\mapsto\binom{u}{k}\) for \(u\ge k\) gives \(m+k=n\). Therefore \(Q=[n]\), \(m=n-k\), and \(\cG=\binom{[n]}{n-k}\).

It remains to handle \(m=k\). The next claim says that no level above \(k\) can occur.

\begin{claim}\label{claim:equalSize}
If \(m=k\), then \(\cG=\binom{[n]}{k}\).
\end{claim}

\begin{poc}
If \(\cG\) has no member above level \(k\), then \(\cG\subseteq\binom{[n]}{k}\), and \(\abs{\cG}=\binom{n}{k}\) gives \(\cG=\binom{[n]}{k}\). Thus suppose that a larger level occurs. Let \(r>k\) be the next nonempty level, and put \(\cB=\cG\cap\binom{[n]}{r}\).

We first isolate what the balanced-shadow identity says at level \(r\). The two families \(\cG\cap\binom{[n]}{k}\) and \(\partial_{k}\cB\) are disjoint: otherwise a \(k\)-set in \(\cG\) would be contained in a member of \(\cB\), contradicting the antichain property. Since there are no nonempty levels between \(k\) and \(r\), we have
\(
\cG_{\le r}=\left(\cG\cap\binom{[n]}{k}\right)\sqcup\cB
\)
and
\(
\partial_{k}\cG_{\le r}=\left(\cG\cap\binom{[n]}{k}\right)\sqcup\partial_{k}\cB.
\)
Therefore \(\abs{\partial_{k}\cG_{\le r}}=\abs{\cG_{\le r}}\) gives \(\abs{\partial_{k}\cB}=\abs{\cB}\) after cancelling the \(k\)-sets already in \(\cG\).

Now Lemma~\ref{lem:katona-shadow} applies to \(\cB\). Indeed, for \(X_{1},X_{2}\in\cB\), the difference bound gives \(\abs{X_{1}\cap X_{2}}\ge r-k\), and the middle-band condition gives \(r\le n-k\), or \(n\ge r+k\). Hence \(\cB=\binom{Q}{r}\) for some \(Q\subseteq[n]\) of size \(r+k\).

Choose \(K\in\cG\cap\binom{[n]}{k}\), which exists because \(m=k\). If \(K\subseteq Q\), then some \(Y\in\binom{Q}{r}\) contains \(K\), contradicting the antichain property. If \(K\nsubseteq Q\), then \(\abs{K\cap Q}\le k-1\), so
\[
\abs{Q\setminus K}\ge(r+k)-(k-1)=r+1.
\]
Choose \(Y\in\binom{Q}{r}\) disjoint from \(K\). Then \(\abs{Y\setminus K}=r>k\), contradicting the difference bound. Both cases are impossible, so no larger level occurs. Hence \(\cG=\binom{[n]}{k}\).
\end{poc}
Since \(m\ge k\), Claims~\ref{claim:large} and~\ref{claim:equalSize} prove the proposition.
\end{proof}

\begin{lemma}\label{lem:no-lift-to-full-k}
Assume \(n>2k\). Let \(\cE\subseteq2^{[n]}\) be \([k]\)-differencing, and let \(\cE^{\uparrow}\) be obtained from \(\cE\) by the lifting procedure in Proposition~\ref{prop:lifting}. If \(\cE^{\uparrow}=\binom{[n]}{k}\), then \(\cE=\binom{[n]}{k}\).
\end{lemma}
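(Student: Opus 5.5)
Suppose that \(\cE\ne\binom{[n]}{k}\). Since \(\cE^{\uparrow}\) is all of level \(k\), every member of \(\cE\) has size at most \(k\). Otherwise such a member would be unchanged by the lifting and would lie in \(\cE^{\uparrow}\) at a level above \(k\). By Claim~\ref{clm:lifting-trajectory}, every member of \(\cE\) of size below \(k\) ends up at level \(k\). So \(\cE\) lives on levels \(\le k\), and the lifting is nontrivial exactly when the minimum size \(r\) of \(\cE\) satisfies \(r<k\). The plan is to look only at the \emph{last} lifting step, from level \(k-1\) to level \(k\), and derive a contradiction from the fact that its output is the full level \(\binom{[n]}{k}\).

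Let \(\cE'\) be the family just before that last step. It is \([k]\)-differencing, since each step of Proposition~\ref{prop:lifting} preserves this. Its members lie on levels \(k-1\) and \(k\), with \(\cE'_{k-1}=\cE'\cap\binom{[n]}{k-1}\ne\varnothing\). After the step, \(\cE'_{k-1}\) is sent injectively by \(\phi\) onto \(\binom{[n]}{k}\setminus\cE'_{k}\). Hence \(\cE'_k\) together with the images \(\phi(A)\supseteq A\) fills \(\binom{[n]}{k}\).

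First key step: since \(\cE'\) is an antichain, no member of \(\cE'_k\) contains a member of \(\cE'_{k-1}\). So the whole upper shadow \(\nabla\cE'_{k-1}\) at level \(k\) is disjoint from \(\cE'_k\), and therefore lies in \(\binom{[n]}{k}\setminus\cE'_k=\phi(\cE'_{k-1})\). This gives \(\abs{\nabla\cE'_{k-1}}\le\abs{\cE'_{k-1}}\). On the other hand, the double counting used in Proposition~\ref{prop:lifting} gives \((n-k+1)\abs{\cE'_{k-1}}\le k\abs{\nabla\cE'_{k-1}}\). Since \(n>2k\), we have \(n-k+1>k\), so \(\abs{\nabla\cE'_{k-1}}>\abs{\cE'_{k-1}}\) whenever \(\cE'_{k-1}\ne\varnothing\). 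This is a contradiction.

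So no nontrivial final step can exist, and thus no lifting step occurs at all. Then \(\cE=\cE^{\uparrow}=\binom{[n]}{k}\). The main point to verify carefully is the bookkeeping of the last step: that the last step really lifts exactly level \(k-1\) into level \(k\), and that \(\cE'_k\) consists of sets untouched by \(\phi\). This is covered by Claim~\ref{clm:lifting-trajectory} and the fact, shown in Proposition~\ref{prop:lifting}, that lifted sets never coincide with unchanged members. Beyond this bookkeeping, the argument is a one-line expansion inequality.
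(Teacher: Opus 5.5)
Your proposal is correct and follows essentially the same route as the paper. Both arguments isolate the last lifting step, which must lift level \(k-1\) to level \(k\). Both then note that the upper shadow of the lifted \((k-1)\)-sets avoids the unchanged \(k\)-sets, so it lies among the at most \(\abs{\cE'_{k-1}}\) inserted sets. This contradicts the expansion inequality \((n-k+1)\abs{\cE'_{k-1}}\le k\abs{\nabla\cE'_{k-1}}\) when \(n>2k\).
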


\begin{proof}[Proof of Lemma~\ref{lem:no-lift-to-full-k}]
It is enough to show that no set was lifted. Suppose, to the contrary, that at least one set was lifted. Look at the last lifting step. Let \(\cF_{0}\) be the family just before this step, and let \(\cF_{1}\) be the family just after it. Since this is the last lifting step and the final family is \(\binom{[n]}{k}\), we have \(\cF_{1}=\binom{[n]}{k}\).

In this last step the procedure replaces all members of the current minimum level, say level \(r\), by distinct \((r+1)\)-supersets. Since \(\cF_{1}\) consists only of \(k\)-sets, necessarily \(r+1=k\). Thus the last step replaces a nonempty family \(\cL\subseteq\binom{[n]}{k-1}\) by a family \(\cM\subseteq\binom{[n]}{k}\) with \(\abs{\cM}=\abs{\cL}\), where each inserted set contains the set it replaces.

Let
\[
\nabla\cL=\left\{K\in\binom{[n]}{k}:A\subseteq K\textup{ for some }A\in\cL\right\}.
\]
No member of \(\nabla\cL\) belongs to \(\cF_{0}\): otherwise it would contain some \(A\in\cL\), contradicting that \(\cF_{0}\) is an antichain. But \(\cF_{1}\) is the whole \(k\)-th level, so every set in \(\nabla\cL\) must be inserted during this last step. Therefore \(\nabla\cL\subseteq\cM\), and hence \(\abs{\nabla\cL}\le\abs{\cM}=\abs{\cL}\).

Now count pairs \(A\subseteq K\) with \(A\in\cL\) and \(K\in\nabla\cL\). Each \(A\in\cL\) is contained in \(n-k+1\) \(k\)-sets, while each \(K\in\nabla\cL\) contains at most \(k\) members of \(\cL\). Hence
\[
(n-k+1)\abs{\cL}\le k\cdot\abs{\nabla\cL}\le k\cdot\abs{\cL}.
\]
This is impossible because \(n>2k\). Hence no set was lifted, so \(\cE^{\uparrow}=\cE\).
\end{proof}

\begin{proof}[Completion of the proof of Theorem~\ref{thm:main}]
Let \(n>2k\). The upper bound was proved in the previous subsection, so suppose that \(\abs{\cF}=\binom{n}{k}\). We first move \(\cF\) into the middle band. Apply the lifting procedure in Proposition~\ref{prop:lifting} to \(\cF\), and call the resulting family \(\cF_{1}\). Thus every member of \(\cF_{1}\) has size at least \(k\). Next apply the same procedure to \(\cF_{1}^{c}\), call the resulting family \(\cR\), and set \(\cG=\cR^{c}\).

Complementation preserves the \([k]\)-differencing property, since \(\abs{A^{c}\setminus B^{c}}=\abs{B\setminus A}\). The first lifting gives \(\abs{A}\ge k\) for every \(A\in\cF_{1}\). During the second lifting, every changed sequence starts from a member \(R\in\cF_{1}^{c}\) with \(\abs{R}<k\) and ends on level \(k\) by Claim~\ref{clm:lifting-trajectory}. After taking complements, the corresponding set starts above level \(n-k\) and ends on level \(n-k\). Every unchanged member of \(\cF_{1}^{c}\) already has size at least \(k\), so its complement has size at most \(n-k\). It follows that \(\cG\) is \([k]\)-differencing, \(\abs{\cG}=\binom{n}{k}\), and \(k\le\abs{A}\le n-k\) for every \(A\in\cG\). Proposition~\ref{prop:middle-band-equality} now gives
\[
\cG=\binom{[n]}{k}\text{ or }\cG=\binom{[n]}{n-k}.
\]

Suppose first that \(\cG=\binom{[n]}{k}\). The second lifting made no changes: otherwise Claim~\ref{clm:lifting-trajectory} would give a \(k\)-set in \(\cR\), whose complement would be an \((n-k)\)-set in \(\cG\), impossible because \(n-k>k\). Hence \(\cF_{1}=\cG=\binom{[n]}{k}\). Lemma~\ref{lem:no-lift-to-full-k}, applied to the first lifting, gives \(\cF=\binom{[n]}{k}\).

Finally, suppose that \(\cG=\binom{[n]}{n-k}\). Then \(\cR=\cG^{c}=\binom{[n]}{k}\). Lemma~\ref{lem:no-lift-to-full-k}, applied to the second lifting, gives \(\cF_{1}^{c}=\binom{[n]}{k}\), and hence \(\cF_{1}=\binom{[n]}{n-k}\). The first lifting made no changes, since Claim~\ref{clm:lifting-trajectory} would otherwise put a \(k\)-set in \(\cF_{1}\), whereas every member of \(\cF_{1}\) has size \(n-k>k\). Therefore \(\cF=\binom{[n]}{n-k}\).
\end{proof}

\bibliographystyle{abbrv}
\bibliography{references}
\end{document}